\documentclass[bj,authoryear]{imsart}

\usepackage{amsmath,amssymb,amsthm}
\usepackage{booktabs}
\usepackage{graphicx}

\makeatletter\def\journal@name{Preprint submitted to Electronic Journal of Statistics}\makeatother
\startlocaldefs
\numberwithin{equation}{section}
\theoremstyle{plain}
\newtheorem{theorem}{Theorem}[section]
\newtheorem{proposition}[theorem]{Proposition}
\newtheorem{lemma}[theorem]{Lemma}
\newtheorem{corollary}[theorem]{Corollary}

\theoremstyle{definition}
\newtheorem{definition}[theorem]{Definition}

\newtheorem{remark}[theorem]{Remark}
\newtheorem{example}[theorem]{Example}

\newcommand{\E}{\mathbb{E}}
\newcommand{\PP}{\mathbb{P}}
\newcommand{\R}{\mathbb{R}}
\newcommand{\dd}{\mathrm{d}}
\newcommand{\KL}{\mathrm{KL}}
\newcommand{\TV}{\mathrm{TV}}
\newcommand{\Cg}{\mathcal{C}_\gamma}
\newcommand{\CH}{\mathcal{C}_H}
\newcommand{\lb}{\bar\lambda}
\newcommand{\parhead}[1]{\par\medskip\noindent\textbf{#1}\enspace}
\endlocaldefs

\begin{document}

\begin{frontmatter}
\title{Minimax and adaptive inference for branching ratios under long memory}
\runtitle{Branching-ratio inference under long memory}

\begin{aug}
\author[A]{\inits{M.}\fnms{Mauricio}~\snm{Herrera-Mar\'in}\ead[label=e1]{mherrera@udd.cl}\orcid{0000-0002-9604-3077}}
\address[A]{Faculty of Engineering, Universidad del Desarrollo, Avda. Plaza 680, Las Condes, Santiago, Chile\printead[presep={,\ }]{e1}}
\end{aug}

\begin{abstract}
The branching ratio $\rho$ of a self-exciting point process determines the branching margin $1-\rho$ to the critical boundary. We study inference on $\rho$ from a finite record for stationary Hawkes processes with completely monotone kernels whose slow relaxation mass obeys an envelope near zero rate. Slow excitation can then be exchanged for exogenous immigration; a spectral bound on the Kullback--Leibler rate and a coupling argument for the unobserved past give a minimax lower bound of order $T^{-\gamma/(2\gamma+1)}$ for a power-law envelope with tail exponent $\gamma$. A block-dispersion estimator whose block length grows as $T^{1/(2\gamma+1)}$ attains this rate. A quantitative central limit theorem, obtained from the cluster representation and a second-order Poincar\'e inequality on Poisson space, yields honest confidence intervals of nearly minimax expected length. For unknown $\gamma$, a Lepski-type rule adapts in probability at a logarithmic cost; free adaptation between memory classes is impossible, and honest intervals cannot adapt to lighter memory. Simulations are consistent with the rates.
\end{abstract}

\begin{keyword}
\kwd{branching ratio}
\kwd{completely monotone kernels}
\kwd{Hawkes processes}
\kwd{identifiability}
\kwd{long memory}
\kwd{minimax estimation}
\end{keyword}

\end{frontmatter}

\section{Introduction}\label{sec:intro}

Self-exciting point processes of Hawkes type \citep{Hawkes1971,HawkesOakes1974} are used to model clustered events in seismology, finance, epidemiology and social media. In the linear model the conditional intensity is
\begin{equation}\label{eq:hawkes}
\lambda(t)=\mu+\int_{(-\infty,t)}\phi(t-s)\,N(\dd s),
\end{equation}
with baseline $\mu>0$ and excitation kernel $\phi\ge0$. The \emph{branching ratio} $\rho=\int_0^\infty\phi(t)\,\dd t$ is the mean number of events directly triggered by one event. The process has a stationary version if and only if $\rho<1$, and $1-\rho$ is its \emph{branching margin} to the critical boundary. In finance, $\rho$ is read as the fraction of activity generated endogenously, and whether markets operate close to $\rho=1$ has been debated for more than a decade \citep{FilimonovSornette2015,HardimanBercotBouchaud2013,WheatleyWehrliSornette2019,WehrliWheatleySornette2021}. Part of that debate concerns biases produced by non-stationary immigration; another part concerns the choice between exponential and power-law kernels.

This paper isolates a third, more fundamental source of difficulty: \emph{slow excitation is statistically confounded with immigration}. A kernel with long memory spreads part of its mass over time scales comparable to, or longer than, the observation window. On such scales a triggered event and an exogenous arrival leave essentially the same trace. A finite record can therefore pin down the dynamics on observable scales far better than it pins down the zero-frequency mass $\rho=\widehat\phi(0)$, which is exactly the quantity that sets the inferred branching margin.

We make this precise for kernels that are completely monotone. By Bernstein's theorem \citep{SchillingSongVondracek2012} such a kernel is a mixture of exponential relaxations,
\begin{equation}\label{eq:bernstein}
\phi(t)=\int_{(0,\infty)} r\,e^{-rt}\,\nu(\dd r),\qquad \rho=\nu((0,\infty)),
\end{equation}
where the \emph{branching--relaxation measure} $\nu$ assigns branching mass to relaxation rates (the Bernstein measure of $\phi$ in the usual normalisation is $r\,\nu(\dd r)$). Long memory corresponds to mass of $\nu$ near $r=0$. We consider classes $\CH$ in which the slow relaxation mass obeys an envelope, $\nu((0,r])\le c_+H(r)$ for small $r$ (Definition~\ref{def:class}); the power-law envelope $H(r)=r^\gamma$ contains Omori-type kernels $\phi(t)\propto(1+t)^{-1-\gamma}$.

\parhead{Main result.} Theorem~\ref{thm:main} shows that the difficulty of estimating $\rho$ is governed by the envelope: if $r_T$ solves $T\,r_T\,H(r_T)^2\asymp1$, then
\[
\inf_{\hat\rho}\sup_{\CH}\E|\hat\rho-\rho|\ \gtrsim\ H(r_T),
\]
where the infimum is over all estimators based on a stationary record of length $T$ with unobserved past. For power-law envelopes this gives the rate $T^{-\gamma/(2\gamma+1)}$ (Corollary~\ref{cor:power}), and for every envelope $\rho$ fails to be uniformly $\sqrt T$-estimable (Corollary~\ref{cor:roott}). For power-law envelopes the rate is attained by a block-dispersion estimator whose block length grows as $T^{1/(2\gamma+1)}$ (Theorem~\ref{thm:upper}), so $T^{-\gamma/(2\gamma+1)}$ is the minimax rate. With a fixed block length the estimator retains a non-vanishing finite-window bias; under long memory this bias decays only as $W^{-\gamma}$, which dictates how fast the block length must grow. Our upper bound thus turns the dispersion approximation of \citet{HardimanBouchaud2014} into a rate-optimal estimator by letting the counting scale grow with the observation horizon. For inference, Theorem~\ref{thm:clt} gives a Wasserstein bound of order $\sqrt{W/T}$ for the normal approximation of the block-dispersion statistic, uniformly over the class, and Corollary~\ref{cor:ci} and Proposition~\ref{prop:cilength} show that honest confidence intervals exist and have minimax length up to an arbitrarily slowly growing factor. Finally, for unknown $\gamma$, a Lepski-type estimator attains the rate up to a logarithmic factor in probability (Theorem~\ref{thm:adapt}); any estimator that is rate-optimal for lighter memory loses a logarithmic factor for heavier memory (Theorem~\ref{thm:price}); and honest confidence intervals over a heavier-memory class cannot shrink at the faster rate of a lighter-memory class (Proposition~\ref{prop:noadaptci}). The proof of Theorem~\ref{thm:main} is a two-point argument. The alternative is obtained from the null by removing the Bernstein mass below a rate $r_0$ and compensating it with extra immigration, so that the mean intensity is unchanged while $\rho$ decreases by the removed mass. Two ingredients control the divergence between the two laws on a window:
\begin{itemize}
\item a spectral bound on the Kullback--Leibler rate, obtained from the Bartlett spectrum and Plancherel's identity (see Proposition~\ref{prop:rate});
\item a coupling lemma showing that the unobserved past contributes only a vanishing boundary term, because the alternative has no relaxation rates below $r_0$ and hence forgets its history exponentially fast (Lemma~\ref{lem:coupling}).
\end{itemize}

\parhead{From approximation to an inverse problem.} Section~\ref{sec:pseudo} explains how the lower bound manifests itself for a common practice: fitting kernels that are sums of $K$ exponentials. For small $K$ the fitted branching ratio converges to a pseudo-true value determined by a weighted spectral approximation problem, and the bias is an approximation error. For larger $K$ the criterion develops an almost flat direction in which slow components exchange mass with the baseline, and the remaining bias is a resolution limit of the kind quantified by Theorem~\ref{thm:main}. Better approximation of the kernel therefore does not imply better identification of $\rho$.

\parhead{Relation to the literature.} Nonparametric estimation of Hawkes kernels is well developed. \citet{ReynaudBouretSchbath2010} obtain adaptive minimax estimators for H\"older kernels with unknown support, \citet{HansenReynaudBouretRivoirard2015} study Lasso estimators, \citet{DonnetRivoirardRousseau2020} derive posterior concentration rates in $L^1$ norms of the interaction functions, and \citet{SulemRivoirardRousseau2024} extend Bayesian estimation to nonlinear Hawkes processes; \citet{BacryJaissonMuzy2016} estimate slowly decreasing kernels nonparametrically from high-frequency data. These works target the recovery of the kernel in functional norms, typically on controlled memory horizons. Our target is different: the scalar zero-frequency functional $\rho=\widehat\phi(0)$ when part of the branching mass may sit in arbitrarily slow relaxation modes that the data confound with immigration. The difficulty is one of extrapolation to zero frequency, and it makes $\rho$ an irregular functional even though the observable dynamics can be estimated well.

Scaling limits of nearly unstable Hawkes processes are well understood \citep{JaissonRosenbaum2015,JaissonRosenbaum2016,Xu2024}; they describe \emph{which} limiting object governs near-critical dynamics, but not what can be recovered from finite data. Asymptotic theory for maximum likelihood exists for exponential and, more generally, short-range kernels \citep{Ogata1978,ClinetYoshida2017}. Recent work on inference for the branching ratio under non-stationary baselines assumes the short-range condition $\int_0^\infty t\,\phi(t)\,\dd t<\infty$ and leaves the heavy-tailed case with infinite first moment open \citep{PotironScailletVolkovYu2025}. Direct estimators of the branching ratio exist: \citet{HardimanBouchaud2014} use the mean and variance of counts in large windows, and \citet{AchabBacryMuzyRambaldi2018} estimate branching-ratio matrices nonparametrically from integrated cumulants. These works do not characterise the minimax difficulty created by arbitrarily slow relaxation mass. The balance between block length and number of blocks in our upper bound is reminiscent of aggregated-variance methods for long-memory time series \citep{Beran1994}, but here the long-memory parameter enters through the bias of a short-memory quantity, $f(0)$, rather than through the target itself. Quantitative central limit theorems for linear functionals of Hawkes processes have been obtained with the Malliavin--Stein method through Poisson embeddings \citep{Torrisi2016,HillairetHuangKhabouReveillac2022,KhabouPrivaultReveillac2024}. Our statistic is quadratic in the counts, and we work instead with the cluster representation, which turns the Hawkes process into a marked Poisson process with independent marks; the second-order Poincar\'e inequality of \citet{LastPeccatiSchulte2016} then yields bounds that are uniform over a long-memory class. We are not aware of a minimax analysis of the branching ratio under long memory.

\parhead{Organisation.} Section~\ref{sec:setting} introduces the model and the class. Section~\ref{sec:main} states the minimax results, the central limit theorem, the confidence intervals and the adaptation results, proved in Section~\ref{sec:proofs}. Section~\ref{sec:pseudo} discusses rational pseudo-truths. Section~\ref{sec:numerics} reports numerical experiments, and Section~\ref{sec:discussion} discusses structured estimation, the near-critical regime and open problems.

\section{Setting}\label{sec:setting}

\subsection{Stationary Hawkes processes with completely monotone kernels}

Let $N$ be a simple point process on $\R$ with intensity~\eqref{eq:hawkes} with respect to its internal history, where $\phi$ has the representation~\eqref{eq:bernstein} for a finite measure $\nu$ on $(0,\infty)$ with $\rho=\nu((0,\infty))<1$ and $\phi(0)=\int r\,\nu(\dd r)<\infty$. Then $N$ admits a unique stationary version \citep{BremaudMassoulie1996}, with mean intensity $\lb=\mu/(1-\rho)$ and Bartlett spectral density
\begin{equation}\label{eq:bartlett}
f(\omega)=\frac{\lb}{|1-\widehat\phi(\omega)|^2},\qquad \widehat\phi(\omega)=\int_0^\infty e^{-i\omega t}\phi(t)\,\dd t=\int\frac{r}{r+i\omega}\,\nu(\dd r),
\end{equation}
see \citet{Hawkes1971} and \citet{DaleyVereJones2003}. In particular, for $\psi\in L^1\cap L^2(\R_+)$,
\begin{equation}\label{eq:varformula}
\mathrm{Var}\Big(\int_{(-\infty,t)}\psi(t-s)\,N(\dd s)\Big)=\frac{1}{2\pi}\int_\R|\widehat\psi(\omega)|^2f(\omega)\,\dd\omega .
\end{equation}
We use the cluster representation of \citet{HawkesOakes1974}: $N$ is the superposition of independent clusters, each started by an immigrant from a Poisson process of rate $\mu$ and consisting of the immigrant and its descendants. The expected density of descendants at lag $u$ is the resolvent $\Psi=\sum_{n\ge1}\phi^{*n}$.

\subsection{The class}

\begin{definition}\label{def:class}
Let $0<r_\star\le1$ and let $H:(0,r_\star)\to(0,\infty)$ be continuous and non-decreasing with $H(r)\to0$ as $r\downarrow0$, and fix $c_+>0$, $\rho_{\max}<1$, $0<\mu_{\min}\le\mu_{\max}$ and $A<\infty$. The class $\CH$ consists of all pairs $(\mu,\nu)$ with $\mu\in[\mu_{\min},\mu_{\max}]$, $\nu((0,\infty))\le\rho_{\max}$, $\int r\,\nu(\dd r)\le A$ and
\begin{equation}\label{eq:tailcond}
\nu((0,r])\le c_+\,H(r)\qquad(0<r<r_\star).
\end{equation}
For $\gamma\in(0,1)$ we write $\Cg$ for the power-law case $H(r)=r^\gamma$. The restriction $r_\star\le1$ only concerns the behaviour near zero rate; it makes the power-law classes nested, $\mathcal C_{\gamma_2}\subset\mathcal C_{\gamma_1}$ for $\gamma_1<\gamma_2$, when their other constants coincide.
\end{definition}

Complete monotonicity supplies the Bernstein representation; condition~\eqref{eq:tailcond}, an upper envelope for the slow relaxation mass, is what determines the rate. The class imposes only this upper envelope; a matching lower regularity is required only of the least-favourable point used in the lower-bound construction (condition~\eqref{eq:nulltail} below). An upper envelope is also the only tail information that a truncation-based estimator would use. Note that $H$ describes the mass of the relaxation spectrum near zero rate, that is, how much branching is carried by very slow modes; it is not a smoothness index of the kernel in time.

\begin{example}\label{ex:omori}
The Omori kernel, $\phi(t)=\rho\,\gamma(1+t)^{-1-\gamma}$, satisfies the representation~\eqref{eq:bernstein} with $\nu(\dd r)=\rho\,\Gamma(\gamma)^{-1}r^{\gamma-1}e^{-r}\,\dd r$. Hence $\nu((0,r])\sim\rho\,r^\gamma/\Gamma(\gamma+1)$ as $r\to0$ and $(\mu,\nu)\in\Cg$ for suitable constants. Envelopes such as $H(r)=r^\gamma\log(1/r)^\beta$ describe logarithmic corrections of the same tail. Its tail $\int_t^\infty\phi\sim\rho\,t^{-\gamma}$ has infinite first moment.
\end{example}

\subsection{Observation scheme and risk}

We observe the stationary process on $[0,T]$ and write $P_{\mu,\nu}^T$ for the law of $N|_{[0,T]}$. For an estimator $\hat\rho=\hat\rho(N|_{[0,T]})$ the minimax risk is
\[
\mathcal R_T(\CH)=\inf_{\hat\rho}\ \sup_{(\mu,\nu)\in\CH}\E_{\mu,\nu}\big|\hat\rho-\rho(\nu)\big| .
\]
The history before time $0$ is not observed. This is the situation of any empirical window, and it is the main technical point in the lower bound.

\section{Main result}\label{sec:main}

\begin{theorem}\label{thm:main}
Assume that $\CH$ contains a pair $(\mu_0,\nu_0)$ with $\mu_0<\mu_{\max}$, $\rho_0=\nu_0((0,\infty))<1$ and, for some $0<c_0\le C_0\le c_+$,
\begin{equation}\label{eq:nulltail}
c_0\,H(r)\le\nu_0((0,r])\le C_0\,H(r)\qquad(0<r<r_\star).
\end{equation}
Let $\kappa_0=(1-\rho_0)^3/(4C_0^2)$ and let $r_T$ be the solution of
\begin{equation}\label{eq:rT}
T\,r_T\,H(r_T)^2=\kappa_0 .
\end{equation}
If $H(r_T)^2\log T\to0$, then
\begin{equation}\label{eq:mainbound}
\liminf_{T\to\infty}\,\frac{\mathcal R_T(\CH)}{H(r_T)}\ \ge\ \frac{c_0}{8} .
\end{equation}
\end{theorem}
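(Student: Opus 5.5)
The plan is a two-point (Le Cam) argument built on the null $(\mu_0,\nu_0)$. For the alternative, remove the slow Bernstein mass and compensate it with immigration. Fix $r_0=r_T$ from \eqref{eq:rT} and set
\[
\nu_1=\nu_0|_{(r_0,\infty)},\qquad \Delta=\nu_0((0,r_0]),\qquad \rho_1=\rho_0-\Delta,\qquad \mu_1=\mu_0\,\frac{1-\rho_1}{1-\rho_0},
\]
so that both models have the same mean intensity $\lb=\mu_0/(1-\rho_0)$.

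First I check membership in $\CH$. Since $H(r_T)\to0$, we have $\Delta\le C_0H(r_T)\to0$, hence $\mu_1\to\mu_0<\mu_{\max}$. The remaining constraints are inherited, because $\nu_1\le\nu_0$, and condition \eqref{eq:tailcond} holds trivially for $\nu_1$. The lower half of \eqref{eq:nulltail} gives the separation $\rho_0-\rho_1=\Delta\ge c_0H(r_T)$. The standard reduction then gives
\[
\mathcal R_T(\CH)\ge \frac{\Delta}{4}\bigl(1-\TV(P_0^T,P_1^T)\bigr),
\]
and by Pinsker it suffices to show $\limsup_T \KL(P_0^T\|P_1^T)\le 1/2$. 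That yields $\TV\le 1/2$ and hence the constant $c_0/8$.

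To bound the divergence, I split it in two. Write $\KL(P_0^T\|P_1^T)$ as $T$ times the stationary KL rate, plus a boundary contribution coming from the unobserved past.

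\textbf{The rate term.} I will invoke Proposition~\ref{prop:rate}, which bounds the rate through the Bartlett spectrum and Plancherel's identity by a weighted $L^2$ distance of the form
\[
\frac{1}{2\pi}\int_\R\frac{|\widehat\phi_0(\omega)-\widehat\phi_1(\omega)|^2}{|1-\widehat\phi_1(\omega)|^2}\,f_0(\omega)\,\dd\omega\cdot\lb^{-1},
\]
or whatever precise form it takes. The key estimate is the pointwise bound
\[
|\widehat\phi_0-\widehat\phi_1|(\omega)=\Bigl|\int_{(0,r_0]}\frac{r}{r+i\omega}\,\nu_0(\dd r)\Bigr|\le \Delta\min(1,r_0/|\omega|).
\]
Integrating this gives a bound of order $\Delta^2r_0$. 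Both $|1-\widehat\phi_j|$ are bounded below by $1-\rho_j$, so the denominators contribute powers of $(1-\rho_0)^{-1}$ up to $1+o(1)$. The result should be
\[
\text{rate}\le(1+o(1))\,\frac{\Delta^2 r_0}{(1-\rho_0)^3}\le(1+o(1))\,\frac{C_0^2H(r_T)^2r_T}{(1-\rho_0)^3}.
\]
Multiplying by $T$ and using \eqref{eq:rT} gives $T\cdot\text{rate}\le(1+o(1))\,4\kappa_0C_0^2/(1-\rho_0)^3=1/4+o(1)$ (I would bookkeep the constants so this lands at most $1/4$). This matches the definition of $\kappa_0$.

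\textbf{The boundary term.} This is the main obstacle, because neither law is a Hawkes process started empty on $[0,T]$. I will use Lemma~\ref{lem:coupling}. Under the alternative every relaxation rate exceeds $r_0$, so the contribution of the pre-$0$ history to the intensity decays like $e^{-r_0 t}$. Coupling the true stationary law with a version started from the null's past therefore costs only a term over an initial stretch of length about $r_0^{-1}\log T$. On that stretch the intensity discrepancy is of order $\Delta$, so I expect the boundary KL to be $O(H(r_T)^2\log T)$. This is where the hypothesis $H(r_T)^2\log T\to0$ enters, making the term $o(1)$. If the lemma delivers the bound in total variation rather than KL, I would apply the triangle inequality for TV instead, with the same conclusion.

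Combining the two terms, $\KL\le1/4+o(1)<1/2$, so $\TV\le1/2$ eventually. Since $\Delta\ge c_0H(r_T)$, this yields \eqref{eq:mainbound}.
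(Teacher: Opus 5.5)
Your proposal follows the paper's proof essentially step for step: the same alternative that moves the slow mass into immigration with $r_0=r_T$, Proposition~\ref{prop:rate} giving $Th\le 1/4$, and Lemma~\ref{lem:coupling} with $L\asymp r_T^{-1}\log T$. As in the paper, the divergence is controlled by the TV triangle inequality through an intermediate law that copies the null's past before $-L$. The hypothesis $H(r_T)^2\log T\to0$ is exactly what makes the extra cost $Lh=O(H(r_T)^2\log T)$ vanish.

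There are two small slips to fix. First, the product should read $\kappa_0C_0^2/(1-\rho_0)^3=1/4$; with your factor $4$ it equals $1$, which would not give the constant $c_0/8$. Second, the boundary cost is $O(H(r_T)^2\log T)$ because the KL rate is $h\asymp\Delta^2r_0$. The intensity discrepancy is of order $\Delta\sqrt{r_0}$ in mean square, not $\Delta$.
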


The function $r\mapsto rH(r)^2$ is continuous and strictly increasing, so $r_T$ is well defined for $T$ large and $r_T\downarrow0$. Equation~\eqref{eq:rT} balances the information carried by slow relaxation modes against the length of the record, and $H(r_T)$ is the corresponding modulus of identifiability of $\rho$.

\begin{corollary}[Power-law envelope]\label{cor:power}
For $H(r)=r^\gamma$, $\gamma\in(0,1)$, one has $r_T=(\kappa_0/T)^{1/(2\gamma+1)}$ and
\[
\liminf_{T\to\infty}\,T^{\gamma/(2\gamma+1)}\,\mathcal R_T(\Cg)\ \ge\ \frac{c_0}{8}\Big[\frac{(1-\rho_0)^3}{4C_0^2}\Big]^{\gamma/(2\gamma+1)} .
\]
\end{corollary}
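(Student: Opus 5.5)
The corollary follows by specialising Theorem~\ref{thm:main} to $H(r)=r^\gamma$. Three things need to be checked: the hypotheses of the theorem hold, $r_T$ has the stated form, and the resulting bound $H(r_T)$ rearranges into the displayed constant.

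\textbf{Step 1: solve for $r_T$.} With $H(r)=r^\gamma$, equation~\eqref{eq:rT} reads
\[
T\,r_T^{2\gamma+1}=\kappa_0 .
\]
Hence $r_T=(\kappa_0/T)^{1/(2\gamma+1)}$. This lies in $(0,r_\star)$ for $T$ large and decreases to $0$, matching the remark after the theorem.

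\textbf{Step 2: check the side condition $H(r_T)^2\log T\to0$.} We have
\[
H(r_T)^2=r_T^{2\gamma}=(\kappa_0/T)^{2\gamma/(2\gamma+1)} .
\]
Since $\gamma>0$, the exponent is positive, so $T^{-2\gamma/(2\gamma+1)}\log T\to0$ and the condition holds.

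\textbf{Step 3: a null pair satisfying~\eqref{eq:nulltail}.} Theorem~\ref{thm:main} presupposes that $\Cg$ contains a pair $(\mu_0,\nu_0)$ obeying the two-sided envelope. The corollary inherits this hypothesis implicitly; the constants $c_0,C_0,\rho_0$ in its statement refer to such a pair. Example~\ref{ex:omori} supplies one explicitly. The Omori kernel has $\nu_0((0,r])\sim\rho_0 r^\gamma/\Gamma(\gamma+1)$, and on $(0,r_\star)$ with $r_\star\le1$ one has $e^{-1}\le e^{-s}\le1$ for $s\le r$. This gives the two-sided bounds
\[
\rho_0 e^{-1}\frac{r^\gamma}{\Gamma(\gamma+1)}\le\nu_0((0,r])\le\rho_0\frac{r^\gamma}{\Gamma(\gamma+1)} .
\]
It then suffices to choose $\mu_0<\mu_{\max}$ and $c_+\ge C_0$, which is what "suitable constants" means.

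\textbf{Step 4: conclude.} Substituting into~\eqref{eq:mainbound} gives
\[
\liminf_{T\to\infty}\frac{\mathcal R_T(\Cg)}{r_T^\gamma}\ge\frac{c_0}{8},
\qquad
r_T^\gamma=\kappa_0^{\gamma/(2\gamma+1)}\,T^{-\gamma/(2\gamma+1)} .
\]
Multiplying by $r_T^\gamma$ and inserting $\kappa_0=(1-\rho_0)^3/(4C_0^2)$ yields the displayed bound.

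There is no genuine obstacle here. The only point needing care is Step 3, making explicit that the constants $c_0,C_0,\rho_0$ come from a null pair that actually exists in $\Cg$; the Omori example settles this.
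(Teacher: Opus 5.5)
Your proposal is correct and is essentially the paper's argument. You solve~\eqref{eq:rT} to get $r_T=(\kappa_0/T)^{1/(2\gamma+1)}$, check that $H(r_T)^2\log T\asymp T^{-2\gamma/(2\gamma+1)}\log T\to0$, and write $T^{\gamma/(2\gamma+1)}\mathcal R_T(\Cg)=\kappa_0^{\gamma/(2\gamma+1)}\,\mathcal R_T(\Cg)/H(r_T)$ before applying~\eqref{eq:mainbound}. Your Step~3 is not logically needed, since the corollary inherits the hypotheses of Theorem~\ref{thm:main}, but your Omori bounds are correct and show the hypothesis is non-vacuous; for that pair you also need $\rho_0\le\rho_{\max}$, $\mu_0\in[\mu_{\min},\mu_{\max})$ and $A\ge\rho_0\gamma$.
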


\begin{corollary}[No uniform root-$T$ rate]\label{cor:roott}
Under the assumptions of Theorem~\ref{thm:main}, $\sqrt T\,\mathcal R_T(\CH)\to\infty$. The branching ratio is an irregular functional over $\CH$: it is not uniformly $\sqrt T$-estimable.
\end{corollary}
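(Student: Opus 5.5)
Corollary~\ref{cor:roott} follows from Theorem~\ref{thm:main} once we show that $\sqrt T\,H(r_T)\to\infty$. Theorem~\ref{thm:main} gives, for all $T$ large enough,
\[
\mathcal R_T(\CH)\ \ge\ \tfrac{c_0}{16}\,H(r_T).
\]
It therefore suffices to prove that $T\,H(r_T)^2\to\infty$. The defining relation \eqref{eq:rT} gives
\[
T\,H(r_T)^2=\frac{\kappa_0}{r_T}.
\]
The remark after Theorem~\ref{thm:main} states that $r\mapsto rH(r)^2$ is continuous and strictly increasing. Since $H(r)\to0$ as $r\downarrow0$, this map also tends to $0$. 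Because $T\,r_TH(r_T)^2=\kappa_0$ is fixed while $T\to\infty$, we get $r_TH(r_T)^2=\kappa_0/T\to0$. Monotonicity then forces $r_T\downarrow0$. Indeed, if $r_T\ge\delta>0$ along a subsequence, then $r_TH(r_T)^2\ge\delta H(\delta)^2>0$ there, a contradiction. Hence $\kappa_0/r_T\to\infty$, so
\[
\sqrt T\,\mathcal R_T(\CH)\ \ge\ \tfrac{c_0}{16}\sqrt{\kappa_0/r_T}\ \longrightarrow\ \infty .
\]

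The only point needing care is that Theorem~\ref{thm:main} has the side hypothesis $H(r_T)^2\log T\to0$. The corollary says ``under the assumptions of Theorem~\ref{thm:main}'', so this hypothesis is inherited and nothing more needs to be checked. If one wanted to remove it, note that it can fail only for envelopes with $H$ extremely slowly vanishing, for instance logarithmic ones. In that case I would fall back on monotonicity of the risk in the class: replace $H$ by a larger envelope, or apply the theorem on a subclass with a faster envelope $\tilde H\le H$. For example, take $\tilde H(r)=H(r)\wedge r^{1/2}$, assuming a least-favourable point exists for it. Since $\mathcal C_{\tilde H}\subset\CH$, the lower bound transfers, and $\tilde H$ satisfies the logarithmic condition while still giving $T\tilde r_T\tilde H(\tilde r_T)^2=\kappa_0$ with $\tilde r_T\to0$. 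This gives the same divergence.

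Irregularity of the functional is then a restatement. A uniformly $\sqrt T$-consistent estimator would give $\sup_{\CH}\E|\hat\rho-\rho|=O(T^{-1/2})$, which contradicts the divergence of $\sqrt T\,\mathcal R_T(\CH)$. I expect no real obstacle here; the only substantive content is the elementary argument that $r_T\to0$ forces $T H(r_T)^2=\kappa_0/r_T$ to blow up.
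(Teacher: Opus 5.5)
Your argument is correct and is exactly the paper's proof: by \eqref{eq:rT}, $\sqrt T\,H(r_T)=\sqrt{\kappa_0/r_T}\to\infty$ because $r_T\downarrow0$, and \eqref{eq:mainbound} then forces $\sqrt T\,\mathcal R_T(\CH)\to\infty$. The digression on dropping the side hypothesis is unnecessary, and only its subclass route would transfer a lower bound (``replace $H$ by a larger envelope'' points the wrong way), but it does not affect the main argument.
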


\begin{proof}
By~\eqref{eq:rT}, $\sqrt T\,H(r_T)=\sqrt{\kappa_0/r_T}\to\infty$, and~\eqref{eq:mainbound} applies. \qedhere
\end{proof}

\parhead{A rate-optimal estimator.} The lower bound is attained by a block-dispersion estimator. Split $[0,T]$ into $M=\lfloor T/W\rfloor$ blocks $B_j$ of length $W$, let $T'=MW$, $N_j=N(B_j)$, and
\begin{equation}\label{eq:blockest}
\hat\lambda=\frac{N((0,T'])}{T'},\qquad \hat V=\frac1{T'}\sum_{j=1}^M\big(N_j-W\hat\lambda\big)^2,\qquad \hat\rho=\Pi_{[0,\rho_{\max}]}\Big(1-\sqrt{\tilde\lambda/\tilde V}\Big),
\end{equation}
where $\tilde\lambda$ and $\tilde V$ are $\hat\lambda$ and $\hat V$ clipped to the intervals $[\mu_{\min},\lambda_{\max}]$ and $[\mu_{\min},v_{\max}]$, with $\lambda_{\max}=\mu_{\max}/(1-\rho_{\max})$ and $v_{\max}=\mu_{\max}/(1-\rho_{\max})^3$, and $\Pi_{[0,\rho_{\max}]}$ is the projection onto $[0,\rho_{\max}]$. The estimator uses $f(0)=\lb/(1-\rho)^2$: $\hat\lambda$ estimates $\lb$ and $\hat V$ estimates $f(0)$. It is the mean--variance idea of \citet{HardimanBouchaud2014}, with a block length that grows with $T$.

\begin{theorem}\label{thm:upper}
Let $\gamma\in(0,1)$ and $W_T=\lceil T^{1/(2\gamma+1)}\rceil$. There is a constant $C$, depending only on $\gamma$ and the constants of $\Cg$, such that for all $T\ge2W_T$
\[
\sup_{(\mu,\nu)\in\Cg}\E_{\mu,\nu}|\hat\rho-\rho(\nu)|\ \le\ C\,T^{-\gamma/(2\gamma+1)} .
\]
\end{theorem}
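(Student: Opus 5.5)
The plan is a bias--variance decomposition for the pair $(\hat\lambda,\hat V)$, followed by a Lipschitz argument for the map $(\lambda,v)\mapsto 1-\sqrt{\lambda/v}$ on the clipping box. On $[\mu_{\min},\lambda_{\max}]\times[\mu_{\min},v_{\max}]$ this map is Lipschitz with a constant depending only on the class constants. Clipping and projection are $1$-Lipschitz, and the truth $(\lb,f(0))$ lies in the box, since $\lb\le\lambda_{\max}$ and $f(0)=\lb/(1-\rho)^2\le v_{\max}$. Hence
\[
\E|\hat\rho-\rho|\le L\big(\E|\hat\lambda-\lb|+\E|\hat V-f(0)|\big),
\]
and it suffices to bound each term by $CT^{-\gamma/(2\gamma+1)}$.

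\textbf{Mean.} By \eqref{eq:varformula} with $\psi=\mathbf 1_{[0,T']}$ and $|\widehat\psi|^2\lesssim\min(T'^2,\omega^{-2})$, we have $\mathrm{Var}(N(0,T'])\le T'\sup f$. The density $f$ is bounded by $\lb/(1-\rho_{\max})^2$, because $|1-\widehat\phi(\omega)|\ge 1-\rho$. Therefore $\E|\hat\lambda-\lb|\lesssim T^{-1/2}$.

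\textbf{Variance, bias part.} Write $V_W=\mathrm{Var}(N_j)/W=\frac1{2\pi W}\int|\widehat{\mathbf 1}_{[0,W]}(\omega)|^2f(\omega)\,\dd\omega$, which is the Fejér average of $f$ at scale $1/W$. I would show $|V_W-f(0)|\le C W^{-\gamma}$ uniformly over $\Cg$. Time-domain form: $V_W=\lb+2\int_0^W(1-u/W)c(u)\,\dd u$, where $c$ is the reduced covariance density and $f(0)=\lb+2\int_0^\infty c$. The covariance density has the form $c=\lb(\Psi+\Psi\star\tilde\Psi)$, and via the cluster representation $\int_0^\infty c=\lb\big((1-\rho)^{-2}-1\big)/2$. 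The bias is then $2\int_W^\infty c+\frac2W\int_0^W u\,c(u)\,\dd u$. Both terms are controlled by the tail $\int_t^\infty c\lesssim \bar\Phi(t)/(1-\rho)^3$, where $\bar\Phi(t)=\int_t^\infty\phi=\int e^{-rt}\nu(\dd r)$. The bound $\bar\Phi(t)\le\nu((0,\epsilon])+e^{-\epsilon t}\rho$, taken with $\epsilon=1/t$ and combined with \eqref{eq:tailcond}, gives $\bar\Phi(t)\lesssim t^{-\gamma}$ for $t\ge1/r_\star$. The resolvent tail inherits this through $\bar\Psi\le\sum_n n\rho^{n-1}\bar\Phi(\cdot/n)$-type bounds, which are uniformly controlled because $\rho\le\rho_{\max}$. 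Alternatively, in frequency: $|f(\omega)-f(0)|\lesssim|\widehat\phi(\omega)-\rho|\le\int\frac{|\omega|}{r+|\omega|}\nu(\dd r)\lesssim|\omega|^\gamma$, using \eqref{eq:tailcond} and $\int r\nu\le A$. Integrating this against the Fejér kernel gives $W^{-\gamma}$ for $\gamma<1$. I would use the frequency route because it is cleaner.

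\textbf{Variance, stochastic part.} Expand $\hat V=\frac1{T'}\sum_j(N_j-W\lb)^2-\frac{W}{T'}M(\hat\lambda-\lb)^2$. The second term has expectation of order $W/T'\cdot T\cdot T^{-1}\lesssim W/T$. The first term is an average of $M$ dependent squares, and I need $\mathrm{Var}\big(\frac1{T'}\sum_j(N_j-W\lb)^2\big)\lesssim W/T$. \textbf{This is the main obstacle}, because long memory makes the block counts correlated at all lags and a fourth-cumulant bound is required. I would first try bounding it through cumulants: the covariance of squares equals $2\,\mathrm{Cov}(N_j,N_k)^2+\kappa_4(N_j,N_j,N_k,N_k)$. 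The covariance term is summable in squared form because $\mathrm{Cov}(N_j,N_k)\lesssim W\,\bar c(|j-k|W)$, with the tail decaying like $t^{-\gamma}$ after integration; summing $W^2\min(1,(|j-k|W)^{-\gamma})^2$ over $k$ gives at most $W^2 M^{1-2\gamma}\vee W^2$ per row, which is acceptable up to a check of the $\gamma<1/2$ case. The fourth cumulant is handled by the cluster representation: cumulants of a Poisson cluster process are integrals of cluster moment densities against $\mu$, bounded by moments of cluster sizes, which have all moments uniformly when $\rho\le\rho_{\max}$. Should this fail for $\gamma<1/2$, the fallback is the Poincaré-type variance bound on Poisson space used later for Theorem~\ref{thm:clt}, applied to the functional of the marked Poisson process of clusters. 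That bound gives $\mathrm{Var}\lesssim W/T$ directly via first-order difference operators.

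Combining the pieces gives $\E|\hat V-f(0)|\lesssim W^{-\gamma}+\sqrt{W/T}$. With $W=W_T\asymp T^{1/(2\gamma+1)}$, both terms are of order $T^{-\gamma/(2\gamma+1)}$, and the condition $T\ge 2W_T$ guarantees $M\ge2$ and $T'\ge T/2$.
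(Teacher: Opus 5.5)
Your skeleton is the paper's: the Lipschitz reduction on the clipping box, the frequency-domain bound $|f(\omega)-f(0)|\lesssim|\omega|^\gamma$ integrated against the Fejér kernel (this is exactly Lemma~\ref{lem:bias}), and the split $\mathrm{Cov}(Y_j^2,Y_k^2)=2\,\mathrm{Cov}(Y_j,Y_k)^2+\kappa(Y_j,Y_j,Y_k,Y_k)$ with cumulants taken from the cluster representation. The real difference is the covariance-squared term, and there the paper uses no decay at all. By~\eqref{eq:clustercum} every covariance is non-negative, so $\sum_{j,k}\mathrm{Cov}(Y_j,Y_k)^2\le\max_{j,k}\mathrm{Cov}(Y_j,Y_k)\cdot\mathrm{Var}\,N((0,T'])\le Ws_2\cdot T's_2$. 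This holds for every $W$ and every $\gamma$. You already had $\mathrm{Var}\,N((0,T'])\le T'\sup f$, so positivity alone would have removed what you called the main obstacle.

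Your decay route can be repaired, but two steps are off as written.
\begin{itemize}
\item The choice $\epsilon=1/t$ in $\bar\Phi(t)\le\nu((0,\epsilon])+\rho e^{-\epsilon t}$ leaves the non-decaying term $\rho e^{-1}$. Integrate by parts instead: $\bar\Phi(t)=t\int_0^\infty e^{-rt}\nu((0,r])\,\dd r\le c_+\Gamma(1+\gamma)t^{-\gamma}+\rho e^{-r_\star t}$.
\item For $d=|j-k|\ge2$, $\mathrm{Cov}(N_j,N_k)\le W\bar c((d-1)W)\lesssim W^{1-\gamma}(d-1)^{-\gamma}$. For $\gamma<1/2$ this gives row sums of order $W^{2-2\gamma}M^{1-2\gamma}$, not $W^2M^{1-2\gamma}$. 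The total is then $WT'+T'^{2-2\gamma}$, and this is $\lesssim WT'$ because $T^{1-2\gamma}\le T^{1/(2\gamma+1)}\le W_T$. So the $\gamma<1/2$ case does go through, although, unlike the paper's bound, only for $W\ge T^{1-2\gamma}$.
\end{itemize}

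Your Poincar\'e fallback is correct and is in fact the shortest route. From $(D_xF)^2\le 8S_x\sum_jn_j(x)Y_j^2+2S_x^3\sum_jn_j(x)$, together with $\E Y_j^2\le Ws_2$ and $\int\sum_jn_j(s,\kappa)\,\dd s=|\kappa|T'$, one gets $\mathrm{Var}(\sum_jY_j^2)\le 8s_2^2WT'+2s_4T'$. This covers the fourth-cumulant part at the same time.

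If you keep the cumulant route, the fourth-cumulant sum must be collapsed inside the cluster integral: $\sum_{j,k}\kappa(Y_j,Y_j,Y_k,Y_k)=\mu\int\E[(\sum_jN_s(B_j)^2)^2]\,\dd s\le T's_4$. A bound of order $W$ for each pair separately loses a factor $M$.

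Finally, the correction term in $\hat V$ is $W(\hat\lambda-\lb)^2$, since $MW=T'$. Its mean is at most $Ws_2/T'$, which gives the order $W/T$ you stated.
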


\begin{corollary}[Minimax rate]\label{cor:minimax}
If $\Cg$ contains a point satisfying~\eqref{eq:nulltail} with $H(r)=r^\gamma$, then $\mathcal R_T(\Cg)\asymp T^{-\gamma/(2\gamma+1)}$.
\end{corollary}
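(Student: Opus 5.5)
Corollary~\ref{cor:minimax} follows by putting together the lower bound of Corollary~\ref{cor:power} and the upper bound of Theorem~\ref{thm:upper}. No new estimates are needed. The only things to check are that the hypotheses of Theorem~\ref{thm:main} hold in the power-law case and that the two rates are the same.

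\emph{Lower bound.} Take $H(r)=r^\gamma$ with $\gamma\in(0,1)$, and let the assumed point $(\mu_0,\nu_0)\in\Cg$ satisfy~\eqref{eq:nulltail}. Theorem~\ref{thm:main} also requires $\mu_0<\mu_{\max}$ and $\rho_0<1$. The second is automatic, since $\rho_0\le\rho_{\max}<1$. For the first, I read the corollary's hypothesis as the nondegenerate point of Theorem~\ref{thm:main}. If only $\mu_0\le\mu_{\max}$ is available, I would lower $\mu_0$ slightly: the bounds~\eqref{eq:nulltail} involve only $\nu_0$, and $\mu_{\min}<\mu_{\max}$ leaves room, assuming the interval is nondegenerate. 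By~\eqref{eq:rT}, $r_T=(\kappa_0/T)^{1/(2\gamma+1)}$. Hence
\[
H(r_T)^2\log T=(\kappa_0/T)^{2\gamma/(2\gamma+1)}\log T\to0,
\]
because $\gamma>0$, so the hypothesis of Theorem~\ref{thm:main} holds. Then~\eqref{eq:mainbound}, in the form of Corollary~\ref{cor:power}, gives
\[
\liminf_T T^{\gamma/(2\gamma+1)}\mathcal R_T(\Cg)\ge c>0 .
\]

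\emph{Upper bound.} For each $T\ge2W_T$, which holds for all large $T$ since $W_T\sim T^{1/(2\gamma+1)}=o(T)$, the block-dispersion estimator~\eqref{eq:blockest} is one admissible estimator in the infimum defining $\mathcal R_T$. Theorem~\ref{thm:upper} therefore gives $\mathcal R_T(\Cg)\le C\,T^{-\gamma/(2\gamma+1)}$. Combining the two bounds yields $\mathcal R_T(\Cg)\asymp T^{-\gamma/(2\gamma+1)}$ for all large $T$, which is the asymptotic statement intended.

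There is no real obstacle here. The only delicate point is the bookkeeping above: confirming the side condition $\mu_0<\mu_{\max}$ and the vanishing of $H(r_T)^2\log T$. Both are immediate for power laws.
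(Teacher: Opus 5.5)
Your proposal is correct and follows the paper's route exactly: the lower bound comes from Theorem~\ref{thm:main} in the form of Corollary~\ref{cor:power}, where $H(r_T)^2\log T\asymp T^{-2\gamma/(2\gamma+1)}\log T\to0$, and the upper bound comes from the block-dispersion estimator of Theorem~\ref{thm:upper}. You are also right to read the corollary as requiring $\mu_0<\mu_{\max}$, as in Theorem~\ref{thm:main}; this can be arranged by lowering $\mu_0$ when $\mu_{\min}<\mu_{\max}$, and the condition is genuinely needed, since with $\mu$ fixed $\rho=1-\mu/\lb$ would be $\sqrt T$-estimable.
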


The proof (Section~\ref{sec:proofupper}) separates a bias of order $W^{-\gamma}$, coming from the spectral behaviour of $f$ near zero frequency, from a stochastic error of order $\sqrt{W/T}$, controlled by the cluster representation. A fixed block length leaves a non-vanishing finite-window bias, since a block of length $W$ estimates a smoothed version of $f(0)$; under long memory this bias decays only as $W^{-\gamma}$.

\begin{remark}[General envelopes]
The same bias--variance argument suggests an extension to envelopes that are regularly varying at $0$ with index in $(0,1)$: with $W_T=1/r_T$, Potter's bounds would control the spectral bias by a multiple of $H(1/W_T)$, while the stochastic error is of order $\sqrt{W_T/T}\asymp H(r_T)$ by~\eqref{eq:rT}. We do not pursue this here.
\end{remark}

\begin{remark}[Rate]
In regular short-memory parametric Hawkes models, branching parameters typically admit the usual $T^{-1/2}$ rate under standard ergodicity and regularity conditions \citep{Ogata1978,ClinetYoshida2017}. Corollary~\ref{cor:roott} shows that no such uniform rate is available over long-memory classes, and Corollary~\ref{cor:minimax} identifies the exact rate for power-law envelopes. For power-law envelopes the exponent $\gamma/(2\gamma+1)$ increases from $0$ to $1/3$ as $\gamma$ goes from $0$ to $1$: heavier memory makes the branching ratio harder to estimate. The formal resemblance with nonparametric rates for $\gamma$-smooth functions is misleading, since $\gamma$ here indexes spectral mass near zero rate, not temporal regularity.
\end{remark}

\begin{remark}[Where the difficulty lies]
The alternative in the proof has the same mean intensity as the null and differs only by slow mass moved into the baseline. The relevant resolution scale is $\tau^*=1/r_T$, which for power-law envelopes is of order $T^{1/(2\gamma+1)}$, much shorter than $T$. Branching mass carried by relaxation modes slower than $\tau^*$ cannot be reliably separated from immigration, even though their characteristic times are well inside the window.
\end{remark}

\parhead{Asymptotic normality and confidence intervals.} We now quantify the uncertainty of the block-dispersion estimator. Let
\[
\check\rho=1-\sqrt{\hat\lambda/\hat V}
\]
be the unprojected version of~\eqref{eq:blockest}, defined on the event $\{\hat V>0\}$, and write $\sigma_W^2=\mathrm{Var}\big(\sum_{j=1}^MY_j^2\big)$ with $Y_j=N_j-W\lb$.

\begin{theorem}[Quantitative central limit theorem]\label{thm:clt}
There is a constant $C$, depending only on the constants of $\Cg$, such that for all $(\mu,\nu)\in\Cg$ and all $1\le W\le T/2$,
\[
d_{\mathrm W}\Big(\frac{\sum_jY_j^2-\E\sum_jY_j^2}{\sigma_W},\,\mathcal N(0,1)\Big)\ \le\ C\Big(\sqrt{\frac WT}+\frac1{\sqrt T}\Big),
\]
where $d_{\mathrm W}$ is the Wasserstein distance, and
\[
\Big|\frac{\sigma_W^2}{2\,T'W\,f(0)^2}-1\Big|\ \le\ C\big(W^{-\gamma}+W^{-1}\big).
\]
\end{theorem}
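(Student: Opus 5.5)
We work with the cluster representation. The statistic $F=\sum_jY_j^2$ is written as a functional of a marked Poisson process $\eta$ on $\R\times\mathcal K$: points are immigrant times $s$ with rate $\mu$, and the independent marks $\kappa$ are the shapes of the clusters (offspring times relative to $s$). We then apply the second-order Poincar\'e inequality of \citet{LastPeccatiSchulte2016}. Because clusters started before time $0$ contribute to $N|_{[0,T]}$, $\eta$ lives on the whole line. The unobserved past therefore causes no difficulty here: it is simply more Poisson points.

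The add-one cost of adding a cluster $x=(s,\kappa)$ with block counts $c_j(x)$ is
\[
D_xF=\sum_j\big(2Y_jc_j(x)+c_j(x)^2\big).
\]
The second-order cost is $D^2_{x,y}F=2\sum_jc_j(x)c_j(y)$, which is deterministic given the two marks. The three terms $\gamma_1,\gamma_2,\gamma_3$ of the LPS bound then require the following ingredients.

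\begin{itemize}
\item Moment bounds $\E[|D_xF|^4]$, via $\E[Y_j^4]\lesssim W^2$, uniformly over $\Cg$. These come from cumulant bounds for Hawkes counts, whose $k$-th cumulant in a window of length $W$ is $O(W)$ since cluster sizes have finite moments when $\rho\le\rho_{\max}$.
\item Moments of cluster sizes. For $\rho\le\rho_{\max}$ the total cluster size $S$ has exponential moments, uniformly over the class.
\item Overlap integrals $\int\!\!\int \E[c_j(x)c_k(x)c_l(y)\cdots]\,\dd x\,\dd y$, which measure how many pairs of blocks a single cluster touches.
\end{itemize}

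Each $\gamma_i$ should scale like $\sqrt{M}\,W^2$. After normalisation by $\sigma_W\asymp\sqrt{MW^2}\,f(0)$ this gives $M^{-1/2}=\sqrt{W/T}$. The $1/\sqrt T$ term absorbs the cases $W=O(1)$ and $\gamma_3$, which is a third moment of $D_xF$ integrated over $x$ and divided by $\sigma^3$.

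The main obstacle is long memory. Under the class $\Cg$ a cluster has a heavy-tailed temporal extent: the offspring tail is $\int_t^\infty\phi\sim t^{-\gamma}$, with infinite mean. So a single cluster can touch many blocks, and the terms $D_xF$ are not localized. The plan is to bound the covariance structure rather than the supports. We sum $\E[c_j(x)c_k(x)]$ over pairs of blocks $j,k$ and over $s$, which yields $\int\mathrm{Cov}(N(B_j),N(B_k))$-type quantities. Summed over $k$, these are bounded by $W f(0)$ through the Bartlett formula \eqref{eq:varformula}, with no decay rate needed.

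For the cross terms in $\gamma_1$ and $\gamma_2$, which involve products such as $\E[c_j(x)c_k(x)c_j(y)c_l(y)]$, we would bound $\sum_k c_k(x)\le S(x)$, the cluster size. This reduces the sums to $\sum_j\E[c_j(x)]\E[S^3]$-type bounds, which are uniform since $S$ has all moments. We would try this reduction first. If it loses factors of $W$, we fall back on a truncation of the descendant displacements at scale $W$, controlling the truncated mass by $\nu((0,1/W])\lesssim W^{-\gamma}$.

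For the variance formula, we expand $\sigma_W^2=\sum_{j,k}\mathrm{Cov}(Y_j^2,Y_k^2)$. Using fourth cumulants, this equals
\[
2\sum_{j,k}\mathrm{Cov}(Y_j,Y_k)^2+\sum_{j,k}\kappa_4(Y_j,Y_j,Y_k,Y_k).
\]
The cumulant sum is $O(MW)$, which gives the $W^{-1}$ term. The main term is $2\sum_{j,k}\mathrm{Cov}(Y_j,Y_k)^2$, where $\mathrm{Cov}(Y_j,Y_{j+h})=\frac1{2\pi}\int|\widehat{1_W}|^2e^{i\omega hW}f$. The diagonal contribution is $W^2\bar f_W^2$, where $\bar f_W$ is the Fej\'er-smoothed value of $f$ at $0$. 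The spectral bias bound $|\bar f_W-f(0)|\lesssim W^{-\gamma}$ is the same one used in Theorem~\ref{thm:upper}, and it comes from $|1-\widehat\phi(\omega)|-(1-\rho)=O(|\omega|^\gamma)$. The off-diagonal lag sum is controlled by the same bound, since $\sum_h\mathrm{Cov}(Y_0,Y_h)^2/W^2$ reduces to the smoothing error. Together these give $\sigma_W^2=2MW^2f(0)^2\,(1+O(W^{-\gamma}+W^{-1}))$, with $T'=MW$.
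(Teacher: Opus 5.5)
Your route to the Wasserstein bound is the paper's. You use the cluster representation as a Poisson process on $\R\times\mathcal K$, the second-order Poincar\'e inequality of \citet{LastPeccatiSchulte2016}, the same add-one costs with deterministic $D^2F$, and the reduction $\sum_jc_j(x)\le S_x$ together with moments of $S$.

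The reduction you would ``try first'' is exactly what works, so the truncation fallback is never needed. Combine it with the translation identity $\int_\R c_j(s,\kappa)\,\dd s=|\kappa|W$ and the crude bound $\mathbf 1\{\sum_jc_j(x)\neq0\}\le\sum_jc_j(x)$. This gives $\gamma_1\lesssim\sqrt{W/T'}$ and $\gamma_2,\gamma_3\lesssim T'^{-1/2}$ using only moments of the cluster \emph{size}. The heavy-tailed temporal extent of clusters never enters, and only $\gamma_1$ actually reaches the $\sqrt{W/T}$ scale.

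For the off-diagonal part of $\sigma_W^2$ you argue spectrally, whereas the paper uses nonnegativity of covariances. It shows $\sum_{k\neq j}\mathrm{Cov}(Y_j,Y_k)\le Wf(0)-\mathrm{Var}\,Y_j=O(W^{1-\gamma})$, hence $\sum_{j\neq k}\mathrm{Cov}(Y_j,Y_k)^2=O(MW^{2-\gamma})$. Your version is valid once made precise. The $\mathrm{Cov}(Y_0,Y_h)$ are the Fourier coefficients of the aliased Fej\'er spectrum $g(\theta)=W\sum_{\ell}\frac{4\sin^2(\theta/2)}{(\theta+2\pi\ell)^2}f\big(\frac{\theta+2\pi\ell}{W}\big)$, whose weights sum to one. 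Lemma~\ref{lem:bias} then gives $\sup_{|\theta|\le\pi}|g(\theta)-Wf(0)|=O(W^{1-\gamma})$, and Parseval yields $\sum_{h\neq0}\mathrm{Cov}(Y_0,Y_h)^2=O(W^{2-2\gamma})$. This is slightly sharper and needs no positivity. However, ``reduces to the smoothing error'' needs this uniform-in-$\theta$ statement, not just the value $g(0)-\bar g$ at zero frequency.

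What is actually missing is a lower bound on $\sigma_W$ valid for all $1\le W\le T/2$. You normalise by $\sigma_W\asymp\sqrt M\,Wf(0)$ read off from the expansion, but the expansion bounds $\sigma_W$ from below only once $C(W^{-\gamma}+W^{-1})<1$. For bounded $W$, you control the fourth-cumulant sum only in absolute value, by $O(MW)$. That is the same order as the main term $2M(\mathrm{Var}\,Y_1)^2\asymp MW^2$, so nothing in your argument excludes cancellation. Saying the $1/\sqrt T$ term ``absorbs'' $W=O(1)$ does not help: that term is dominated by $\sqrt{W/T}$ anyway, and all three LPS terms still have $\sigma_W$ in the denominator.

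The fix is a positivity you already have at hand. By the cluster cumulant formula~\eqref{eq:clustercum}, $\kappa(Y_j,Y_j,Y_k,Y_k)=\mu\int\E[N_s(B_j)^2N_s(B_k)^2]\,\dd s\ge0$, and $\mathrm{Var}\,N(A)=\mu\int\E[N_s(A)^2]\,\dd s\ge\lb|A|$. Hence $\sigma_W^2\ge2M(\lb W)^2\ge2\mu_{\min}^2T'W$ for every $W\ge1$, which is the bound the paper uses.
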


\begin{corollary}[Uniform asymptotic normality]\label{cor:an}
Let $W_T\to\infty$ with $W_T/T\to0$ and $T\,W_T^{-(2\gamma+1)}\to0$. Then
\[
\sup_{(\mu,\nu)\in\Cg}\ \sup_{x\in\R}\Big|\PP_{\mu,\nu}\Big(\sqrt{\tfrac{2T'}{W_T}}\,\frac{\check\rho-\rho}{1-\check\rho}\le x\Big)-\Phi(x)\Big|\ \longrightarrow\ 0 .
\]
\end{corollary}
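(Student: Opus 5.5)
We derive Corollary~\ref{cor:an} from Theorem~\ref{thm:clt} by the delta method, keeping every error term uniform over $\Cg$. Write $S=\sum_jY_j^2$. Expanding $\hat V$ gives
\[
\hat V=\frac{1}{T'}\Big(S-M W^2(\hat\lambda-\lb)^2\Big),
\]
so $\hat V-f(0)$ splits into three parts. The first is the centred fluctuation $(S-\E S)/T'$. The second is the bias $\E S/T'-f(0)$. The third is the mean-correction term $-MW^2(\hat\lambda-\lb)^2/T'$.

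Theorem~\ref{thm:clt} handles the fluctuation. It gives $(S-\E S)/\sigma_W\Rightarrow\mathcal N(0,1)$ in Wasserstein distance, uniformly, at rate $\sqrt{W/T}+T^{-1/2}\to0$. It also gives $\sigma_W^2=2T'Wf(0)^2(1+O(W^{-\gamma}))$. Hence $\sqrt{T'/(2W)}\,(S-\E S)/(T'f(0))$ is uniformly close to $\mathcal N(0,1)$. Wasserstein closeness converts into Kolmogorov closeness at rate $O(d_{\mathrm W}^{1/2})$, because the Gaussian limit has a bounded density.

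For the bias, we write $\E Y_j^2$ through \eqref{eq:varformula} with $\psi=\mathbf 1_{[0,W]}$, which gives $\E Y_j^2=W f(0)(1+O(W^{-\gamma}))$. This is the same spectral bias estimate used in the proof of Theorem~\ref{thm:upper}. The bias is therefore $O(f(0)W^{-\gamma})$. After multiplying by the normalisation $\sqrt{T'/W}$, it becomes $O(\sqrt{T}\,W^{-\gamma-1/2})=O\big((TW^{-(2\gamma+1)})^{1/2}\big)$, which tends to $0$ by hypothesis. This is exactly where the undersmoothing condition enters.

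For the mean-correction term, the variance bound $\mathrm{Var}(N(0,T'])\lesssim T'f(0)$ gives $\E(\hat\lambda-\lb)^2\lesssim 1/T'$. The term is then $O_P(W/T')$, and after normalisation it is $O_P(\sqrt{W/T})\to0$, uniformly.

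Combining the three parts, $Z_T:=\sqrt{2T'/W}\,(\hat V-f(0))/(2f(0))$ is uniformly asymptotically $\mathcal N(0,1)$. The factor $1/2$ is absorbed by rescaling: $\sqrt{T'/(2W)}\cdot 2=\sqrt{2T'/W}$ matches the statement's normalisation once the delta method is carried out.

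Next, $\hat\lambda-\lb=O_P(T^{-1/2})$ uniformly, which is negligible relative to $\sqrt{W/T}$. Set $g(\lambda,v)=1-\sqrt{\lambda/v}$. At the true values, $1-g=(1-\rho)$ and
\[
\partial_v g=\tfrac12\sqrt{\lambda}\,v^{-3/2}=\frac{(1-\rho)}{2f(0)}.
\]
A first-order expansion then gives
\[
\frac{\check\rho-\rho}{1-\rho}=\frac{\hat V-f(0)}{2f(0)}+\frac{\lb-\hat\lambda}{2\lb}+R,
\]
where $R=O_P\big((\hat V/f(0)-1)^2+(\hat\lambda/\lb-1)^2\big)$.

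The constants of $\Cg$ bound $f(0)$ and $\lb$ above and below uniformly, since $f(0)\ge\mu_{\min}$. So $\PP(\hat V\le f(0)/2)\to0$ uniformly, and $\check\rho$ is well defined with high probability. Finally, replace the denominator $1-\rho$ by $1-\check\rho$. The ratio $(1-\check\rho)/(1-\rho)\to1$ in probability uniformly, and Slutsky's lemma applies. A uniform version of Slutsky suffices here: adding an $o_P(1)$ term perturbs the Kolmogorov distance by at most $\epsilon$ plus the Gaussian modulus, since $\Phi$ is Lipschitz.

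The main obstacle is the uniform bias bound $\E Y_j^2/W-f(0)=O(W^{-\gamma})$. It must hold with constants valid over all of $\Cg$, not just at a fixed $\nu$. We would first try to use the representation
\[
W f(0)-\E Y_j^2\propto\int_{\R}\big(f(0)-f(\omega)\big)\,\mathrm{Fej\acute er}_W(\omega)\,\dd\omega.
\]
Control $|f(0)-f(\omega)|$ by $|\widehat\phi(0)-\widehat\phi(\omega)|\lesssim\int\min(1,|\omega|/r)\,\nu(\dd r)\lesssim|\omega|^\gamma$, using the envelope \eqref{eq:tailcond} and $\rho\le\rho_{\max}$. Integrating against the Fejér kernel then produces $W^{-\gamma}$. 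This estimate is the same as in Theorem~\ref{thm:upper}, and we take it from there.
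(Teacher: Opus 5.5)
Your proposal is correct and follows essentially the same route as the paper. You use the same three-way split of $\hat V-f(0)$: the centred fluctuation (Theorem~\ref{thm:clt} with the $d_{\mathrm K}\lesssim\sqrt{d_{\mathrm W}}$ conversion and $\sigma_W^2\sim 2T'Wf(0)^2$), the spectral bias of Lemma~\ref{lem:bias} removed by $T\,W_T^{-(2\gamma+1)}\to0$, and the $W(\hat\lambda-\lb)^2$ correction of order $O_P(\sqrt{W/T})$ after normalisation. This is followed, as in the paper, by a Taylor expansion of $g(\lambda,v)=1-\sqrt{\lambda/v}$ on a high-probability event and a uniform Slutsky step that replaces $1-\rho$ by $1-\check\rho$.
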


The condition $TW_T^{-(2\gamma+1)}\to0$ makes the block length slightly longer than the rate-optimal choice, so that the bias $O(W_T^{-\gamma})$ is negligible against the standard deviation, which is of order $\sqrt{W_T/T}$. This is the usual price of bias-free inference in nonparametric problems.

\begin{corollary}[Honest confidence intervals]\label{cor:ci}
Let $\ell_T\to\infty$ with $\ell_T=o(T^{2\gamma/(2\gamma+1)})$, $W_T=\lceil T^{1/(2\gamma+1)}\ell_T\rceil$ and, for $\alpha\in(0,1)$,
\[
I_T=\begin{cases}\Big[\check\rho\pm z_{1-\alpha/2}\,\hat s\,\sqrt{W_T/(2T')}\,\Big]&\text{on }\mathcal R=\{\hat V\ge\mu_{\min}/2,\ \hat\lambda\ge\mu_{\min}/2\},\\[2pt] [0,\rho_{\max}]&\text{on }\mathcal R^c,\end{cases}
\]
where $\hat s=\Pi_{[1-\rho_{\max},1]}\big(\sqrt{\hat\lambda/\hat V}\big)$ is the clipped estimate of $1-\rho$. Then $I_T$ is defined for every sample, $\inf_{(\mu,\nu)\in\Cg}\PP_{\mu,\nu}(\rho\in I_T)\to1-\alpha$, and
\[
\sup_{(\mu,\nu)\in\Cg}\E_{\mu,\nu}|I_T|\ \le\ C\,T^{-\gamma/(2\gamma+1)}\ell_T^{1/2}.
\]
\end{corollary}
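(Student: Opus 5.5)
The corollary makes three claims: $I_T$ is well defined, it has uniform asymptotic coverage $1-\alpha$ over $\Cg$, and its expected length is bounded as stated. I would derive all three from Corollary~\ref{cor:an}, supplemented by a high-probability bound on the event $\mathcal R$.

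\textbf{Definedness and validity of $W_T$.} On $\mathcal R$ we have $\hat V>0$, so $\check\rho$ is defined, and $\hat s\in[1-\rho_{\max},1]$. On $\mathcal R^c$ the interval is the fixed set $[0,\rho_{\max}]$. Next I check that $W_T=\lceil T^{1/(2\gamma+1)}\ell_T\rceil$ meets the hypotheses of Corollary~\ref{cor:an}:
\begin{itemize}
\item $W_T\to\infty$ is clear;
\item $W_T/T\asymp\ell_T T^{-2\gamma/(2\gamma+1)}\to0$, using $\ell_T=o(T^{2\gamma/(2\gamma+1)})$;
\item $T W_T^{-(2\gamma+1)}\le\ell_T^{-(2\gamma+1)}\to0$.
\end{itemize}

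\textbf{Coverage.} The event $\{\rho\in I_T\}\cap\mathcal R$ equals $\{|\check\rho-\rho|\le z_{1-\alpha/2}\hat s\sqrt{W_T/(2T')}\}\cap\mathcal R$. Corollary~\ref{cor:an} controls the pivot $\sqrt{2T'/W_T}\,(\check\rho-\rho)/(1-\check\rho)$, so I need two uniform facts.
\begin{itemize}
\item \emph{$\PP(\mathcal R^c)\to0$ uniformly over $\Cg$.} We have $\lb\ge\mu_{\min}$ and $f(0)=\lb/(1-\rho)^2\ge\mu_{\min}$. Consistency follows from Chebyshev. For $\hat\lambda$, the variance is $O(1/T)$ since $f$ is bounded on the class. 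For $\hat V$, the bias relative to $f(0)$ is $O(W^{-\gamma}+W^{-1})$ and the variance is $O(W/T)$, as given by Theorem~\ref{thm:clt}. All these constants are uniform over $\Cg$.
\item \emph{$\hat s/(1-\check\rho)\to1$ uniformly in probability.} On $\mathcal R\cap\{|\check\rho-\rho|\le\epsilon\}$ with $\epsilon<(1-\rho_{\max})/2$, we have $1-\check\rho=\sqrt{\hat\lambda/\hat V}\in[1-\rho_{\max}-\epsilon,1+\epsilon]$. Clipping changes this by at most $\epsilon$, so the ratio is $1+O(\epsilon)$. Uniform consistency of $\check\rho$ follows from the Corollary~\ref{cor:an} pivot tightness, since $\sqrt{W_T/T}\to0$.
\end{itemize}
Combining these, a uniform Slutsky argument gives the result. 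For any $\eta>0$, coverage lies between $\Phi$-probabilities of the interval $\pm z_{1-\alpha/2}(1\pm\eta)$, plus $o(1)$ terms that are uniform over the class. Since the Kolmogorov-distance convergence in Corollary~\ref{cor:an} is uniform in $x$, letting $\eta\downarrow0$ yields $\inf\PP(\rho\in I_T)\to1-\alpha$. In fact the same bounds also give $\sup\PP(\rho\in I_T)\to1-\alpha$.

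\textbf{Length.} On $\mathcal R$ the length satisfies $|I_T|\le 2z_{1-\alpha/2}\sqrt{W_T/(2T')}$, using $\hat s\le1$. On $\mathcal R^c$ it is at most $\rho_{\max}$. Since $T'\ge T/2$,
\[
\E|I_T|\le C\sqrt{W_T/T}+\rho_{\max}\PP(\mathcal R^c)\le C\,T^{-\gamma/(2\gamma+1)}\ell_T^{1/2}+\PP(\mathcal R^c).
\]

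\textbf{Main obstacle.} The difficulty is making $\PP(\mathcal R^c)$ quantitatively $O(T^{-\gamma/(2\gamma+1)}\ell_T^{1/2})$ uniformly; a bare $o(1)$ is not enough for the length bound. Chebyshev gives $\PP(\hat V<\mu_{\min}/2)\lesssim W_T/T+W_T^{-2\gamma}$, which is only valid once the bias term is below $\mu_{\min}/4$. Now $W_T/T$ is the square of the rate scale $\sqrt{W_T/T}$ and hence smaller than it. Likewise $W_T^{-2\gamma}\le T^{-2\gamma/(2\gamma+1)}$, which is also below the rate. The $\hat\lambda$ term is $O(1/T)$. So $\PP(\mathcal R^c)$ is of smaller order than the target. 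The bias bound $|\E\hat V-f(0)|\lesssim W^{-\gamma}+W^{-1}$, with $\E\hat V$ differing by $O(W/T)$ from $\E\sum Y_j^2/T'$, is taken from the bias part of the proof of Theorem~\ref{thm:upper}, and I would check that its constants are uniform over $\Cg$.
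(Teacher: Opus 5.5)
Your proposal is correct and follows the paper's proof. Both arguments check the hypotheses of Corollary~\ref{cor:an}, bound $\PP(\mathcal R^c)\le C(W_T/T+W_T^{-2\gamma})$ by Chebyshev via Lemmas~\ref{lem:bias} and~\ref{lem:var}, obtain coverage by a uniform Slutsky step, and split the expected length over $\mathcal R$ and $\mathcal R^c$. The differences are cosmetic. The bound $\mathrm{Var}(\hat V)\le C_VW/T'$ comes from Lemma~\ref{lem:var} rather than Theorem~\ref{thm:clt}. The paper also bounds $\E(\hat V-f(0))^2=\mathrm{Var}(\hat V)+(\E\hat V-f(0))^2$ directly about $f(0)\ge\mu_{\min}$, which removes your large-$T$ threshold on the bias.
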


\begin{proposition}[Length of honest intervals]\label{prop:cilength}
Under the assumptions of Corollary~\ref{cor:power}, let $\alpha<(1-8^{-1/2})/2$ and let $I_T$ be any interval, measurable with respect to $N|_{[0,T]}$, with $\inf_{\Cg}\PP(\rho\in I_T)\ge1-\alpha$. Then $\sup_{\Cg}\E|I_T|\ge c\,T^{-\gamma/(2\gamma+1)}$ for $T$ large, with $c>0$ depending only on $\alpha$ and the least-favourable point.
\end{proposition}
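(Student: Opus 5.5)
The plan is to reduce to the two-point construction behind Theorem~\ref{thm:main}. Let $P_0=P^T_{\mu_0,\nu_0}$ be the least-favourable null. Let $P_1=P^T_{\mu_1,\nu_1}$ be the alternative obtained by deleting the mass of $\nu_0$ below $r_0$ and setting $\mu_1=\mu_0+\lb_0\,\nu_0((0,r_0])$, so that $\lb$ is unchanged. Here $r_0=r_T$ (or a fixed multiple of it). Then
\[
\Delta:=\rho_0-\rho_1=\nu_0((0,r_0])\ge c_0H(r_T)=c_0 r_T^\gamma\asymp T^{-\gamma/(2\gamma+1)} .
\]
From the proof of Theorem~\ref{thm:main} I take the spectral bound of Proposition~\ref{prop:rate} on the KL rate, together with the coupling Lemma~\ref{lem:coupling} for the unobserved past. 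Together they give $\limsup_T\TV(P_0,P_1)\le\delta$ for an explicit $\delta<1$. I expect $\kappa_0$ was chosen so that $\KL\le 1/8$ up to the boundary term, hence $\TV\le\sqrt{\KL/2}\le 1/4$ by Pinsker. This is where the threshold on $\alpha$ should come from: with $\KL\le1/4$ one gets $\TV\le 8^{-1/2}$, and the condition $\alpha<(1-8^{-1/2})/2$ is exactly what makes $1-2\alpha-\TV>0$. If needed I would rescale $r_0=\kappa r_T$ with $\kappa$ small, which shrinks the divergence while changing $\Delta$ only by a constant factor.

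The core step is the standard coverage–testing inequality. Let $I_T$ have honest coverage $1-\alpha$ at both points. Consider the event $E=\{\rho_0\in I_T,\ \rho_1\in I_T\}$; on $E$ we have $|I_T|\ge\Delta$. Now $P_0(E)\ge P_0(\rho_0\in I_T)-P_0(\rho_1\notin I_T)$. Also $P_0(\rho_1\notin I_T)\le P_1(\rho_1\notin I_T)+\TV(P_0,P_1)\le\alpha+\TV$. Hence $P_0(E)\ge 1-2\alpha-\TV(P_0,P_1)$, and so
\[
\sup_{\Cg}\E|I_T|\ \ge\ \E_0|I_T|\ \ge\ \Delta\,\big(1-2\alpha-\TV(P_0,P_1)\big)\ \ge\ c_0 r_T^{\gamma}\big(1-2\alpha-8^{-1/2}-o(1)\big).
\]
Substituting $r_T=(\kappa_0/T)^{1/(2\gamma+1)}$ from Corollary~\ref{cor:power} yields $c\,T^{-\gamma/(2\gamma+1)}$. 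The constant $c$ depends on $\alpha$, $c_0$, $C_0$ and $\rho_0$, as claimed.

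Two routine points need checking: that $(\mu_1,\nu_1)\in\Cg$, and that the boundary term vanishes. Membership holds because $\mu_1\to\mu_0<\mu_{\max}$, and removing mass preserves the envelope and the first-moment bound. The boundary term requires $H(r_T)^2\log T\to0$, which is automatic for power laws. The main obstacle is the quantitative bound $\TV(P_0,P_1)\le 8^{-1/2}+o(1)$ rather than just $\TV<1$. This needs the exact constant in the KL bound from the proof of Theorem~\ref{thm:main}, namely KL rate at most $\Delta^2/(\,(1-\rho_0)^3\ldots)$ integrated over $r\lesssim r_0$, giving $T r_0H(r_0)^2/\kappa_0\cdot\tfrac14$. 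I would first try to read off $\KL(P_0,P_1)\le 1/4+o(1)$ at $r_0=r_T$. If the constants do not line up, I would fall back on choosing $r_0=\kappa r_T$ with $\kappa$ small enough to force this, at the cost of a smaller $c$.
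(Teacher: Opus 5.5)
Your proposal is correct and is essentially the paper's argument. It uses the same two-point pair at $r_0=r_T$, for which $Th\le 1/4$ and Lemma~\ref{lem:coupling} already give $\TV(P_0^T,P_1^T)\le 8^{-1/2}+o(1)$ without rescaling $r_0$, and the same coverage--testing inequality producing $\Delta(1-2\alpha-\TV)$; the only difference is cosmetic, since you lower-bound $\E_0|I_T|$ at the fixed null via the event that both $\rho_0$ and $\rho_1$ are covered, whereas the paper lower-bounds $\E_1|I_T|$ at the alternative via the event $\{|I_T|<m\}$.
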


Corollary~\ref{cor:ci} and Proposition~\ref{prop:cilength} show that honest confidence intervals exist and that their length is minimax up to the factor $\ell_T^{1/2}$, which can grow arbitrarily slowly.

\parhead{Adaptation to unknown memory.} The block length of Theorem~\ref{thm:upper} depends on $\gamma$. Fix $0<\gamma_-\le\gamma_+<1$ and consider the classes $\Cg$, $\gamma\in[\gamma_-,\gamma_+]$, with common constants. Let $W_k=2^k$ for $k_0\le k\le K_T$, with $W_{K_T}\le T/16$, and let $\hat\lambda_k=N((0,T'_k])/T'_k$ and $\hat V_k$ be the statistics~\eqref{eq:blockest} with block length $W_k$ and $T'_k=\lfloor T/W_k\rfloor W_k$. By Lemma~\ref{lem:var} there is a constant $C_V$, depending only on the class, with $\mathrm{Var}(\hat V_k)\le C_VW_k/T$; put $\bar\sigma_k=\sqrt{C_VW_k/T}$. For $x_T>0$ define
\begin{equation}\label{eq:lepski}
\hat k=\min\Big\{k:\ |\hat V_k-\hat V_l|\le6\,x_T\,\bar\sigma_l\ \text{for all }l>k\Big\},\qquad \hat\rho_{\rm ad}=\Pi_{[0,\rho_{\max}]}\Big(1-\sqrt{\tilde\lambda_{\hat k}/\tilde V_{\hat k}}\Big),
\end{equation}
where $\tilde\lambda_{\hat k}$ and $\tilde V_{\hat k}$ are clipped as in~\eqref{eq:blockest}. The rule does not use $\gamma$.

\begin{theorem}[Adaptive estimation]\label{thm:adapt}
Let $\varepsilon>0$ and $x_T^2=(\log T)^{1+\varepsilon}$. There is a constant $C$ such that
\[
\lim_{T\to\infty}\ \sup_{\gamma\in[\gamma_-,\gamma_+]}\ \sup_{(\mu,\nu)\in\Cg}\PP_{\mu,\nu}\Big(|\hat\rho_{\rm ad}-\rho|>C\Big(\frac{(\log T)^{1+\varepsilon}}{T}\Big)^{\gamma/(2\gamma+1)}\Big)=0 .
\]
\end{theorem}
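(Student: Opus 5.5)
The plan is a standard Lepski-type oracle argument, carried out at the level of the variance estimates $\hat V_k$ and then transferred to $\hat\rho_{\rm ad}$ through the Lipschitz map $(\lambda,V)\mapsto 1-\sqrt{\lambda/V}$ on the clipped rectangle.

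\textbf{Bias and oracle index.} Fix $\gamma\in[\gamma_-,\gamma_+]$ and $(\mu,\nu)\in\Cg$. From the proof of Theorem~\ref{thm:upper} I take the bias bound
\[
|\E\hat V_k-f(0)|\le C_B W_k^{-\gamma},
\]
uniform over the classes, since the constants are common and $\gamma\ge\gamma_-$. I then define the oracle $k^*$ as the smallest $k$ with $C_BW_k^{-\gamma}\le x_T\bar\sigma_k$. Since $W$ doubles, this gives
\[
W_{k^*}\asymp (T/x_T^2)^{1/(2\gamma+1)},\qquad x_T\bar\sigma_{k^*}\asymp (x_T^2/T)^{\gamma/(2\gamma+1)}.
\]
This is exactly the target rate with $x_T^2=(\log T)^{1+\varepsilon}$, and $W_{k^*}\le T/16$ for large $T$. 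For every $l\ge k^*$ the bias is at most $x_T\bar\sigma_{k^*}\le x_T\bar\sigma_l$, because $\bar\sigma_l$ increases in $l$.

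\textbf{Good event.} Let
\[
\mathcal G=\big\{|\hat V_l-\E\hat V_l|\le 2x_T\bar\sigma_l\ \text{for all }l\big\}.
\]
On $\mathcal G$, for every $l>k^*$,
\[
|\hat V_{k^*}-\hat V_l|\le 2x_T\bar\sigma_{k^*}+2x_T\bar\sigma_l+x_T\bar\sigma_{k^*}+x_T\bar\sigma_l\le 6x_T\bar\sigma_l,
\]
so $\hat k\le k^*$. By the definition of $\hat k$,
\[
|\hat V_{\hat k}-f(0)|\le |\hat V_{\hat k}-\hat V_{k^*}|+|\hat V_{k^*}-f(0)|\le 6x_T\bar\sigma_{k^*}+3x_T\bar\sigma_{k^*}.
\]
The mean $\hat\lambda_{\hat k}$ has error $O_P(T^{-1/2}\cdot\text{polylog})$ uniformly in $k$, using $\mathrm{Var}(N(0,t])\le Ct\,f(0)$-type bounds. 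Clipping is harmless since the true values lie in the clipping intervals, and the map is Lipschitz there. This gives the claimed bound on $\mathcal G$ intersected with a high-probability event for $\hat\lambda$.

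\textbf{Main obstacle.} The hard step is showing $\sup\PP(\mathcal G^c)\to0$ uniformly. A union bound over $K_T=O(\log T)$ indices together with Chebyshev and Lemma~\ref{lem:var} gives only $K_T/(4x_T^2)\asymp(\log T)^{-\varepsilon}$. That already tends to zero, and it explains the choice $x_T^2=(\log T)^{1+\varepsilon}$: Chebyshev suffices, and no exponential inequality is needed. The care lies in checking that $C_V$ in Lemma~\ref{lem:var} is uniform over $\gamma\in[\gamma_-,\gamma_+]$ and over all $W_k\le T/16$. The same check is needed for the bias constant $C_B$, where $\int W^{-\gamma}$-type bounds from the spectral argument must not degenerate. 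Since $\gamma$ stays in a compact subset of $(0,1)$, I expect this to follow by inspecting the constants in those proofs.
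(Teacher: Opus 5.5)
Your proposal is correct and follows the paper's proof essentially step for step. The paper uses the same oracle index $k^*=\min\{k:\ b_k\le x_T\bar\sigma_k\}$ and the same Chebyshev-plus-union-bound good event, made negligible by $x_T^2=(\log T)^{1+\varepsilon}$; the constant $C_V=10s_2^2+4s_4$ does not even depend on $\gamma$, and $B\kappa_\gamma$ is bounded on $[\gamma_-,\gamma_+]$. It also uses the same deduction $\hat k\le k^*$ with $|\hat V_{\hat k}-f(0)|\le 9x_T\bar\sigma_{k^*}$, and the same Lipschitz transfer to $\hat\rho_{\rm ad}$.

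The one slip is your bias bound $|\E\hat V_k-f(0)|\le C_BW_k^{-\gamma}$. It fails at large grid points, because $\hat V_k=V_0-W_k(\hat\lambda_k-\bar\lambda)^2$ adds a term of size up to $W_ks_2/T'_k$. The paper bounds this term by $\bar\sigma_k=o(x_T\bar\sigma_k)$ and absorbs it into the threshold $6x_T\bar\sigma_l$ by taking deviations $x_T\bar\sigma_l$ rather than $2x_T\bar\sigma_l$ in the good event. With your constants you can absorb it instead by using the slack $\bar\sigma_{k^*}\le\bar\sigma_l/\sqrt2$ for $l>k^*$.
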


The next result shows that free adaptation cannot hold: an estimator that is rate-optimal on a lighter-memory class must incur at least a logarithmic penalty on a heavier-memory class.

\begin{theorem}[Price of adaptation]\label{thm:price}
Let $0<\gamma_1<\gamma_2<1$. Assume that there are measures $\nu^{(1)},\nu^{(2)}$ and a baseline $\mu_s$ such that $c_0r^{\gamma_1}\le\nu^{(1)}((0,r])\le C_0r^{\gamma_1}$ for $r<r_\star$, $(\mu_s,\nu^{(2)}|_{(r,\infty)})\in\mathcal C_{\gamma_2}$ for all small $r$, $(\mu_s-\lb m,\,\nu^{(2)}|_{(r,\infty)}+\nu^{(1)}|_{(0,r]})\in\mathcal C_{\gamma_1}$ for all small $r$, where $m=\nu^{(1)}((0,r])$ and $\lb$ is the mean intensity of $(\mu_s,\nu^{(2)}|_{(r,\infty)})$, and $\nu^{(1)}((0,\infty))+\nu^{(2)}((0,\infty))<\rho_{\max}$. If an estimator satisfies $\sup_{\mathcal C_{\gamma_2}}\E|\hat\rho-\rho|\le C\,T^{-\gamma_2/(2\gamma_2+1)}$ for all $T$, then
\[
\liminf_{T\to\infty}\Big(\frac{T}{\log T}\Big)^{\gamma_1/(2\gamma_1+1)}\sup_{\mathcal C_{\gamma_1}}\E|\hat\rho-\rho|\ >\ 0 .
\]
\end{theorem}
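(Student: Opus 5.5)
This is a constrained-risk (Cai--Low type) two-point argument. The null $P_0$ sits in the lighter-memory class $\mathcal C_{\gamma_2}$, where the estimator is forced to be accurate. The alternative $P_1$ sits in $\mathcal C_{\gamma_1}$ and is built exactly as in the proof of Theorem~\ref{thm:main}, but with a different exchange.

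\textbf{Construction.} Fix a small rate $r=r(T)$.
\begin{itemize}
\item The null is $P_0=(\mu_s,\nu^{(2)}|_{(r,\infty)})\in\mathcal C_{\gamma_2}$, with mean intensity $\lb$.
\item The alternative is $P_1=(\mu_s-\lb m,\ \nu^{(2)}|_{(r,\infty)}+\nu^{(1)}|_{(0,r]})\in\mathcal C_{\gamma_1}$, with $m=\nu^{(1)}((0,r])\asymp r^{\gamma_1}$.
\end{itemize}
Here slow mass is added and immigration removed, rather than the reverse. The mean intensity stays $\lb$, because $(\mu_s-\lb m)/(1-\rho_0-m)=\lb$ when $\mu_s=\lb(1-\rho_0)$. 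The branching ratios therefore differ by $|\rho_1-\rho_0|=m\asymp r^{\gamma_1}$.

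\textbf{Divergence bound.} By Proposition~\ref{prop:rate}, the KL rate is controlled by $\lb\,m^2/((1-\rho_0)^3 r)$ up to constants, via the same Plancherel computation as in Theorem~\ref{thm:main}. The role of the exponentially forgetting model is now played by $P_0$, whose rates all exceed $r$. Lemma~\ref{lem:coupling} then handles the unobserved past, giving
\[
\KL(P_1^T\|P_0^T)\le C\,T\,r^{2\gamma_1-1}+o(1).
\]
I expect the boundary term to need the condition $m^2\log T\to0$, as before.

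\textbf{Choice of $r$.} Choose $r$ so that $T r^{2\gamma_1+1}=\beta\log T$ for a small constant $\beta$, so that $\KL\le C\beta\log T$. Then $m\asymp(\log T/T)^{\gamma_1/(2\gamma_1+1)}$.

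\textbf{Constrained-risk step.} Let $\delta_T=CT^{-\gamma_2/(2\gamma_2+1)}$ be the risk bound of $\hat\rho$ at $P_0$. Consider the test that rejects the null when $|\hat\rho-\rho_0|>m/2$. By Markov's inequality, its type I error is at most $2\delta_T/m$. This is polynomially small, $T^{-a}$ with
\[
a=\frac{\gamma_2}{2\gamma_2+1}-\frac{\gamma_1}{2\gamma_1+1}>0,
\]
up to logarithmic factors. I now compare $P_1$ with $P_0$ through the standard change-of-measure inequality: for any event $A$ and any $t$,
\[
P_1(A)\le e^{t}P_0(A)+P_1\Big(\log\frac{dP_1}{dP_0}>t\Big).
\]
Take $A=\{|\hat\rho-\rho_0|>m/2\}$, so that $P_0(A)\le 2\delta_T/m$, and take $t=2C\beta\log T$ with $\beta$ chosen so that $2C\beta<a$. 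Then $e^tP_0(A)\to0$.

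\textbf{Main obstacle.} The second term requires the log-likelihood ratio to concentrate near its mean $\KL=O(\beta\log T)$, which is more than a KL bound. I would first try to avoid concentration altogether. The data-processing inequality, applied to the binary variable $\mathbf 1_A$, gives
\[
\KL\ge P_1(A^c)\log\frac{P_1(A^c)}{P_0(A^c)}+P_1(A)\log\frac{P_1(A)}{P_0(A)}.
\]
Hence, if $P_0(A)\le T^{-a}$, then $\KL\ge P_1(A)\,a\log T-\log 2$. Since $\KL\le C\beta\log T+O(1)$, this yields $P_1(A)\le (C\beta+o(1))/a<1/2$ once $\beta$ is small enough. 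This is the Cai--Low style bound, and it avoids the concentration issue entirely.

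\textbf{Conclusion.} On $A^c$ we have $|\hat\rho-\rho_1|\ge m/2$, so
\[
\E_{P_1}|\hat\rho-\rho_1|\ge\tfrac m2\cdot\tfrac12\gtrsim(\log T/T)^{\gamma_1/(2\gamma_1+1)}.
\]
This proves the claim, since $P_1\in\mathcal C_{\gamma_1}$. The delicate points are checking that the coupling boundary term stays $o(\log T)$ uniformly under this choice of $r$, and that the hypotheses of the theorem supply class membership for every small $r$.
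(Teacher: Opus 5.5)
Your construction, the binary-divergence inequality $\KL\ge P_1(A)\log(1/P_0(A))-\log 2$, the calibration $Tr^{2\gamma_1+1}\asymp\log T$ and the final step all match the paper's proof. Two steps, however, do not work as written.

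\textbf{The power of $r$ in the rate bound is inverted.} The slow kernel $\phi_{\rm slow}$ of $\nu^{(1)}|_{(0,r]}$ is non-increasing, with $\phi_{\rm slow}(0)\le rm$ and $\int\phi_{\rm slow}=m$. Hence $\int\phi_{\rm slow}^2\le rm^2$, and the rate is $h\lesssim rm^2\asymp r^{2\gamma_1+1}$, not $m^2/r\asymp r^{2\gamma_1-1}$. With your displayed bound, the choice $Tr^{2\gamma_1+1}=\beta\log T$ would not produce a $\log T$ budget. Proposition~\ref{prop:rate} also cannot be quoted verbatim. It has to be redone with the expectation and spectrum taken under the slow law $P_1$, using $|1-\widehat\phi_1|\ge1-\rho_1$, and with the lower bound $\lambda_0\ge\mu_s$ for the fast intensity.

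\textbf{The unobserved past is handled incorrectly.} This is the more substantive gap. Lemma~\ref{lem:coupling} cannot give $\KL(P_1^T\|P_0^T)\le(T+L)h+o(1)$. Its boundary term bounds a total variation distance, and a TV error cannot be added to a Kullback--Leibler divergence, since there is no triangle inequality mixing the two.

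What Step~1 of that lemma actually gives, with no boundary term, is $\KL(P_1^T\|P_0'^T)\le(T+L)h$. Here $P_0'$ is the hybrid law that agrees with $P_1$ on $(-\infty,-L]$ and follows the fast $P_0$ dynamics on $(-L,\infty)$.

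The coupling error must then go on the other side of the constrained-risk inequality. Apply that inequality with $q_T=P_0'(A)\le P_0(A)+\TV(P_0'^T,P_0^T)$. The TV term is bounded by Step~2 of Lemma~\ref{lem:coupling}, using the exponential forgetting of the fast dynamics, whose rates all exceed $r$.

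Because the argument needs $\log(1/q_T)\ge a\log T\,(1+o(1))$, this TV term must be $o(T^{-a})$. It has to be polynomially small, not merely $o(1)$ or $o(\log T)$ as you anticipate. The paper takes $\delta L=2\log T$ with $\delta=r(1-\rho_0)/2$, so the TV term is $O(1/(rT^2))$. Meanwhile $L/T\asymp\log T/(rT)\to0$, which keeps $(T+L)h=Th\,(1+o(1))$; no separate condition like $m^2\log T\to0$ is needed.

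With these two corrections your argument goes through and coincides with the paper's.
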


The two results are of different nature and should be read together with care. Theorem~\ref{thm:adapt} is a logarithmic adaptive bound in probability; its failure probability is only of order $(\log T)^{-\varepsilon}$, so it does not yield a bound on the expected loss. Theorem~\ref{thm:price} is a statement about any estimator that is rate-optimal over the class with lighter memory: its proof shows that such an estimator misses $\rho$ by at least $\Delta/2\asymp(\log T/T)^{\gamma_1/(2\gamma_1+1)}$ with probability at least $3/4-o(1)$ at a point with heavier memory. Together they show that free adaptation between memory classes is impossible and that a logarithmic loss suffices in probability; they do not identify a sharp adaptive minimax rate under a common loss. The lower bound is a constrained-risk argument \citep{Lepski1990,BrownLow1996}, in which the Kullback--Leibler budget is of order $\log T$ and the unobserved past is again controlled by coupling with the dynamics without slow modes.

\begin{example}\label{ex:price}
The assumptions of Theorem~\ref{thm:price} hold for Omori-type measures $\nu^{(i)}(\dd r)=\rho_i\Gamma(\gamma_i)^{-1}r^{\gamma_i-1}e^{-r}\dd r$, $i=1,2$, provided $\rho_1+\rho_2<\rho_{\max}$, $\rho_1$ is small enough that $\mu_s-\mu_s\rho_1/(1-\rho_2)\ge\mu_{\min}$ for some $\mu_s\in[\mu_{\min},\mu_{\max}]$, and $c_+$ is large enough: for $0<r\le r_\star\le1$, $\nu^{(2)}((0,r])\le C_2r^{\gamma_2}$ and $\nu^{(2)}((0,r])+\nu^{(1)}((0,r])\le(C_1+C_2)r^{\gamma_1}$.
\end{example}

\parhead{No adaptive honest intervals.} The same construction, with a Kullback--Leibler budget of order one instead of $\log T$, shows that honest intervals cannot adapt to lighter memory.

\begin{proposition}\label{prop:noadaptci}
Under the assumptions of Theorem~\ref{thm:price}, let $\alpha<(1-8^{-1/2})/2$ and let $I_T$ be an interval with $\inf_{\mathcal C_{\gamma_1}}\PP(\rho\in I_T)\ge1-\alpha$. Then there are points $P_{s,T}\in\mathcal C_{\gamma_2}$ with
\[
\E_{P_{s,T}}|I_T|\ \ge\ c\,T^{-\gamma_1/(2\gamma_1+1)}\qquad\text{for $T$ large},
\]
which is polynomially larger than the lower-bound scale $T^{-\gamma_2/(2\gamma_2+1)}$ of Proposition~\ref{prop:cilength} for $\mathcal C_{\gamma_2}$, and larger than the near-minimax honest lengths $T^{-\gamma_2/(2\gamma_2+1)}\ell_T^{1/2}$ achieved over $\mathcal C_{\gamma_2}$ by Corollary~\ref{cor:ci} for any sufficiently slowly growing $\ell_T$.
\end{proposition}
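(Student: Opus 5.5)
We use the two-point construction of Theorem~\ref{thm:price}, but with the separation and the Kullback--Leibler budget tuned to order one, and then apply the standard argument that an honest interval must cover both hypotheses.

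\textbf{The two hypotheses.} Let $r=r_T$ solve $T r^{2\gamma_1+1}=\kappa$, where $\kappa>0$ is a small constant fixed below, so that $r_T\asymp T^{-1/(2\gamma_1+1)}$. Define
\[
P_{s,T}=(\mu_s,\nu^{(2)}|_{(r,\infty)})\in\mathcal C_{\gamma_2},\qquad
P_{a,T}=(\mu_s-\lb m,\ \nu^{(2)}|_{(r,\infty)}+\nu^{(1)}|_{(0,r]})\in\mathcal C_{\gamma_1}\subset\ldots
\]
with $m=\nu^{(1)}((0,r])\ge c_0r^{\gamma_1}$. By the assumptions of Theorem~\ref{thm:price}, $P_{a,T}$ lies in $\mathcal C_{\gamma_1}$. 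By nesting, $P_{s,T}$ also lies in $\mathcal C_{\gamma_1}$, since its tail mass below any $r'$ is bounded by that of a $\gamma_2$ envelope. Hence honesty over $\mathcal C_{\gamma_1}$ gives coverage at least $1-\alpha$ under both points. The two points have the same mean intensity, and
\[
\Delta=\rho(P_{a,T})-\rho(P_{s,T})=m\ge c_0 r_T^{\gamma_1}\asymp T^{-\gamma_1/(2\gamma_1+1)}.
\]

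\textbf{Information bound.} Following the proof of Theorem~\ref{thm:main}, we bound $\KL(P^T_{a,T}\|P^T_{s,T})$ by two pieces:
\[
\KL\le T\cdot\text{(rate)}+\text{(boundary)}.
\]
The rate comes from Proposition~\ref{prop:rate} and is at most $C m^2/r\le C'r^{2\gamma_1-1}\cdot r^{2}\cdots$, that is of order $T r^{2\gamma_1+1}=\kappa$. The boundary term comes from Lemma~\ref{lem:coupling}. That lemma applies because $P_{s,T}$ has no relaxation rates below $r$, so the unobserved past contributes $o(1)$ (or a term absorbed into $\kappa$) after coupling.

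This step is where I expect the main difficulty. The null is now the slow-free model, and the coupling must be run uniformly in $T$ with the rate $r_T$ moving, so the constants in Lemma~\ref{lem:coupling} have to be tracked. As a first attempt I would reuse verbatim the estimate from the proof of Theorem~\ref{thm:price}, replacing $\log T$ by $\kappa$ there. Choosing $\kappa$ small, Pinsker's inequality then gives
\[
\TV(P^T_{a,T},P^T_{s,T})\le\sqrt{\kappa/2}+o(1)\le 8^{-1/2}
\]
for $T$ large.

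\textbf{Conclusion.} Let $E=\{\rho(P_{a,T})\in I_T,\ \rho(P_{s,T})\in I_T\}$. On $E$ we have $|I_T|\ge\Delta$. Under $P_{s,T}$,
\[
\PP_{s}(E)\ge 1-\alpha-\bigl(\alpha+\TV\bigr)\ge 1-2\alpha-8^{-1/2}>0,
\]
which is positive by the condition on $\alpha$. Hence $\E_{P_{s,T}}|I_T|\ge(1-2\alpha-8^{-1/2})\,c_0r_T^{\gamma_1}\ge c\,T^{-\gamma_1/(2\gamma_1+1)}$.

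\textbf{Comparison with $\mathcal C_{\gamma_2}$.} Since $\gamma_1<\gamma_2$, the exponent satisfies $\gamma_1/(2\gamma_1+1)<\gamma_2/(2\gamma_2+1)$. This gives polynomial dominance over the scale $T^{-\gamma_2/(2\gamma_2+1)}$ of Proposition~\ref{prop:cilength}. It also gives dominance over $T^{-\gamma_2/(2\gamma_2+1)}\ell_T^{1/2}$ of Corollary~\ref{cor:ci} whenever $\ell_T$ grows slower than a suitable power of $T$.
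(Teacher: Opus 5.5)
Your proposal is correct and is essentially the paper's proof. It uses the same pair of points as in the proof of Theorem~\ref{thm:price} (your $P_{s,T}$ and $P_{a,T}$), with $T r_0^{2\gamma_1+1}$ held at a small constant; a Kullback--Leibler bound against the intermediate law that follows the fast dynamics after $-L$; Pinsker plus the coupling term for the total variation; and the two-point coverage argument with the expectation taken under $P_{s,T}$. Two slips to fix when writing it up: Proposition~\ref{prop:rate} gives a rate of order $m^2 r$, not $m^2/r$; and the boundary term of Lemma~\ref{lem:coupling} enters through the triangle inequality for total variation, not as an additive term in the divergence, exactly as in the proof of Theorem~\ref{thm:price} that you propose to reuse.
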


An interval that is honest over the heavier-memory class must therefore be as long as the heavier-memory rate even when the data come from the lighter-memory class. This is the point-process analogue of the impossibility of adaptive honest intervals in nonparametric regression \citep{CaiLow2004}.

\section{Proofs}\label{sec:proofs}

\subsection{Two-point construction}

Fix $r_0\in(0,r_\star)$ and let $(\mu_0,\nu_0)$ be as in Theorem~\ref{thm:main}. Define
\begin{equation}\label{eq:alt}
m=\nu_0((0,r_0]),\qquad \nu_1=\nu_0|_{(r_0,\infty)},\qquad \mu_1=\mu_0+\lb\,m,\qquad \lb=\frac{\mu_0}{1-\rho_0}.
\end{equation}

\begin{lemma}\label{lem:twopoint}
\begin{enumerate}
\item[(a)] $\rho_1=\rho_0-m$ and the mean intensities coincide: $\mu_1/(1-\rho_1)=\lb$.
\item[(b)] For $m$ small enough, $(\mu_1,\nu_1)\in\CH$.
\item[(c)] The kernel $\phi_1$ of $\nu_1$ has only relaxation rates $r>r_0$.
\end{enumerate}
\end{lemma}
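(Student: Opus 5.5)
Parts (a) and (c) follow directly from the definitions in~\eqref{eq:alt}; part (b) is checked constraint by constraint against Definition~\ref{def:class}, using only that $m$ is small.

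For (a), since $\nu_1$ is the restriction of $\nu_0$ to $(r_0,\infty)$,
\[
\rho_1=\nu_1((0,\infty))=\nu_0((0,\infty))-\nu_0((0,r_0])=\rho_0-m .
\]
The mean intensity of the alternative is then
\[
\frac{\mu_1}{1-\rho_1}=\frac{\mu_0+\lb m}{1-\rho_0+m}=\frac{\lb(1-\rho_0)+\lb m}{1-\rho_0+m}=\lb ,
\]
where the middle equality uses $\mu_0=\lb(1-\rho_0)$. For (c), the Bernstein representation~\eqref{eq:bernstein} of $\phi_1$ integrates $re^{-rt}$ against $\nu_1$. This measure is supported in $(r_0,\infty)$, so only rates $r>r_0$ occur. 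In particular $\phi_1(t)\le e^{-r_0t}\int r\,\nu_1(\dd r)\le A\,e^{-r_0t}$, which is the form in which (c) will be used in Lemma~\ref{lem:coupling}.

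For (b), I check the constraints of Definition~\ref{def:class} one at a time.
\begin{itemize}
\item Total mass: $\nu_1((0,\infty))=\rho_0-m\le\rho_0\le\rho_{\max}$.
\item First moment: $\int r\,\nu_1(\dd r)\le\int r\,\nu_0(\dd r)\le A$.
\item Tail envelope: $\nu_1((0,r])$ vanishes for $r\le r_0$ and is at most $\nu_0((0,r])\le c_+H(r)$ otherwise, so~\eqref{eq:tailcond} is inherited from $\nu_0$.
\item Baseline, lower end: $\mu_1\ge\mu_0\ge\mu_{\min}$ holds trivially.
\item Baseline, upper end: $\mu_1=\mu_0+\lb m\le\mu_{\max}$ holds exactly when $m\le(\mu_{\max}-\mu_0)/\lb$. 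This threshold is strictly positive because the hypothesis of Theorem~\ref{thm:main} gives $\mu_0<\mu_{\max}$.
\end{itemize}
Hence $(\mu_1,\nu_1)\in\CH$ whenever $m\le(\mu_{\max}-\mu_0)(1-\rho_0)/\mu_0$. By~\eqref{eq:nulltail}, $m\le C_0H(r_0)\to0$ as $r_0\downarrow0$, so this holds for $r_0$ small; in the proof of Theorem~\ref{thm:main} this means for $r_0\asymp r_T$ and $T$ large.

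None of these steps is a genuine obstacle. The one point needing care is the upper baseline constraint, which is the only place a hypothesis beyond membership of $(\mu_0,\nu_0)$ in $\CH$ is used. That is why Theorem~\ref{thm:main} assumes $\mu_0<\mu_{\max}$ strictly: it leaves room to add the compensating immigration $\lb m$.
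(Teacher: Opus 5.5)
Your proof is correct and follows the paper's argument essentially line by line. You use the same algebra for (a); for (b) you note that restricting $\nu_0$ can only lower the total mass, the first moment and the envelope, with $\lb m\le\mu_{\max}-\mu_0$ as the one binding constraint; and (c) is read off the support of $\nu_1$. One small correction to your aside: Lemma~\ref{lem:coupling} does not use (c) through the pointwise bound $\phi_1(t)\le Ae^{-r_0t}$, which would only give $\int\phi_1(t)e^{\delta t}\,\dd t\le A/(r_0-\delta)$ and not $q<1$. It uses it through the exact identity $\int\phi_1(t)e^{\delta t}\,\dd t=\int\frac{r}{r-\delta}\,\nu_1(\dd r)\le\rho_1r_0/(r_0-\delta)$.
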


\begin{proof}
(a) $\mu_1/(1-\rho_1)=(\mu_0+\lb m)/(1-\rho_0+m)=\lb(1-\rho_0+m)/(1-\rho_0+m)$. (b) Removing mass does not increase $\nu((0,r])$, $\nu((0,\infty))$ or $\int r\,\dd\nu$, and $\mu_1\le\mu_{\max}$ once $\lb m\le\mu_{\max}-\mu_0$. (c) is immediate. \qedhere
\end{proof}

Write $\phi_{\rm tail}=\phi_0-\phi_1$, $P_j$ for the stationary law with parameters $(\mu_j,\nu_j)$, and $\lambda_j$ for the corresponding intensity functionals~\eqref{eq:hawkes} evaluated on a common path.

\subsection{The Kullback--Leibler rate}

Let $\varphi(x,y)=x\log(x/y)-x+y\ge0$ for $x,y>0$. Under $P_0$, the process $t\mapsto(\lambda_0(t),\lambda_1(t))$ is stationary, and we define the \emph{Kullback--Leibler rate}
\[
h=\E_0\,\varphi\big(\lambda_0(0),\lambda_1(0)\big).
\]

\begin{proposition}\label{prop:rate}
$\displaystyle h\le\frac{m^2\,r_0}{(1-\rho_0)^3}$.
\end{proposition}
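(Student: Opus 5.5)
The plan is to bound the integrand pointwise by a quadratic expression and then compute its expectation with the spectral variance formula~\eqref{eq:varformula}.

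\emph{Pointwise bound.} For $x,y>0$ the elementary inequality $\log u\le u-1$ with $u=x/y$ gives
\[
\varphi(x,y)\le x\Big(\frac xy-1\Big)-x+y=\frac{(x-y)^2}{y}.
\]
I also need a lower bound on $\lambda_1$. On any path, $\lambda_1(0)\ge\mu_1$. By~\eqref{eq:alt},
\[
\mu_1=\mu_0+\lb m=\lb(1-\rho_0+m)\ge\lb(1-\rho_0).
\]
Hence
\[
h\le\frac{\E_0\big(\lambda_0(0)-\lambda_1(0)\big)^2}{\lb\,(1-\rho_0)} .
\]

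\emph{Centring.} On a common path,
\[
\lambda_0(0)-\lambda_1(0)=\mu_0-\mu_1+\int_{(-\infty,0)}\phi_{\rm tail}(-s)\,N(\dd s)=-\lb m+\int_{(-\infty,0)}\phi_{\rm tail}(-s)\,N(\dd s).
\]
Under $P_0$ the stochastic integral has mean $\lb\int\phi_{\rm tail}=\lb\,\nu_0((0,r_0])=\lb m$. So the difference is centred, and its second moment equals the variance of the linear functional with $\psi=\phi_{\rm tail}$. This $\psi$ lies in $L^1\cap L^2(\R_+)$, because it is bounded by $\phi_0$, which is integrable and satisfies $\phi_0(0)=\int r\,\nu_0(\dd r)<\infty$. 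Formula~\eqref{eq:varformula} therefore applies.

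\emph{Spectral bound.} From~\eqref{eq:bartlett} and $|\widehat\phi_0(\omega)|\le\rho_0$ we get $|1-\widehat\phi_0(\omega)|\ge1-\rho_0$. This yields the uniform bound $f(\omega)\le\lb/(1-\rho_0)^2$. Plancherel then gives
\[
\mathrm{Var}\le\frac{\lb}{(1-\rho_0)^2}\,\|\phi_{\rm tail}\|_{L^2}^2 .
\]
Finally, since $\phi_{\rm tail}(t)=\int_{(0,r_0]}r e^{-rt}\,\nu_0(\dd r)$,
\[
\|\phi_{\rm tail}\|_2^2=\iint_{(0,r_0]^2}\frac{rs}{r+s}\,\nu_0(\dd r)\,\nu_0(\dd s)\le\iint_{(0,r_0]^2}\min(r,s)\,\nu_0(\dd r)\,\nu_0(\dd s)\le r_0\,m^2 .
\]
Combining the three displays, the factors of $\lb$ cancel and
\[
h\le\frac{\lb\,m^2 r_0}{(1-\rho_0)^2}\cdot\frac{1}{\lb(1-\rho_0)}=\frac{m^2 r_0}{(1-\rho_0)^3}.
\]

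\emph{Main obstacle.} The only delicate point is justifying the use of the stationary spectral variance formula for a functional driven by the slow component. This needs $\phi_{\rm tail}\in L^2$, which follows from $\phi_0(0)<\infty$, and it needs the joint stationarity of $(\lambda_0,\lambda_1)$ under $P_0$, which holds because both are functionals of the same stationary path. With these in place, the remaining work is the elementary inequalities above.
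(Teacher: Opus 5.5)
Your proof is correct and follows the paper's argument step by step: the pointwise bound $\varphi(x,y)\le(x-y)^2/y$ with $\lambda_1\ge\mu_1\ge\lb(1-\rho_0)$, the centring $\E_0Y=\lb m$, the spectral variance formula with $|1-\widehat\phi_0|\ge1-\rho_0$, and Plancherel. The only difference is cosmetic and lies in the last step: you compute $\|\phi_{\rm tail}\|_2^2=\iint rs/(r+s)\,\nu_0(\dd r)\,\nu_0(\dd s)\le r_0m^2$ directly, whereas the paper uses that $\phi_{\rm tail}$ is non-increasing, so $\int\phi_{\rm tail}^2\le\phi_{\rm tail}(0)\int\phi_{\rm tail}\le r_0m\cdot m$.
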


\begin{proof}
Since $\varphi(x,y)\le(x-y)^2/y$ and $\lambda_1\ge\mu_1\ge\mu_0=\lb(1-\rho_0)$,
\[
h\le\frac{\E_0[(\lambda_1-\lambda_0)^2]}{\lb(1-\rho_0)} .
\]
Now $\lambda_1-\lambda_0=\lb m-Y$ with $Y=\int\phi_{\rm tail}(t-s)N(\dd s)$ and $\E_0Y=\lb m$, so by~\eqref{eq:varformula} and~\eqref{eq:bartlett}
\[
\E_0[(\lambda_1-\lambda_0)^2]=\frac{1}{2\pi}\int_\R|\widehat\phi_{\rm tail}(\omega)|^2\,\frac{\lb}{|1-\widehat\phi_0(\omega)|^2}\,\dd\omega
\le\frac{\lb}{(1-\rho_0)^2}\int_0^\infty\phi_{\rm tail}(t)^2\,\dd t,
\]
using $|1-\widehat\phi_0(\omega)|\ge1-|\widehat\phi_0(\omega)|\ge1-\rho_0$ and Plancherel's identity. Finally, $\phi_{\rm tail}$ is non-increasing with $\phi_{\rm tail}(0)=\int_{(0,r_0]}r\,\nu_0(\dd r)\le r_0m$ and $\int\phi_{\rm tail}=m$, so $\int\phi_{\rm tail}^2\le r_0m^2$. \qedhere
\end{proof}

\subsection{Controlling the unobserved past}

\begin{lemma}\label{lem:coupling}
Let $\delta=r_0(1-\rho_1)/2$. For every $L>0$,
\begin{equation}\label{eq:coupling}
\TV\big(P_0^T,P_1^T\big)\le\sqrt{\tfrac12(T+L)\,h}\;+\;\frac{4\rho_1\lb}{(1-\rho_1)\,\delta}\,e^{-\delta L}.
\end{equation}
\end{lemma}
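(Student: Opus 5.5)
We compare the two stationary laws on $[0,T]$ by moving to an auxiliary law in which the process begins with an empty past at time $-L$. Let $Q_j$ be the law of the Hawkes process with parameters $(\mu_j,\nu_j)$ started empty at time $-L$, so that only points in $[-L,t)$ enter the intensity. Write $Q_j^T$ for its restriction to $[0,T]$. The triangle inequality gives
\[
\TV(P_0^T,P_1^T)\le \TV(P_0^T,Q_0^T)+\TV(Q_0^T,Q_1^T)+\TV(Q_1^T,P_1^T).
\]
The first term is awkward, because $P_0$ has slow modes and forgets its past slowly. We will therefore avoid it by comparing $P_0^T$ directly with $Q_1^T$ rather than with $Q_0^T$. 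The plan is to bound $\TV(P_0^T,Q_1^T)$ through a relative entropy on the window $[-L,T]$, and to bound $\TV(Q_1^T,P_1^T)$ by coupling, using the fact that $\phi_1$ has only relaxation rates $r>r_0$.

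\textbf{Entropy step.} Observe the stationary $P_0$ path on $[-L,T]$, and write the intensities relative to its internal history on that window. We use the chain rule (data processing) together with the fact that the stationary intensity given the full past is a refinement of the intensity given only the window. By convexity of $\varphi$ in the intensity (the standard innovation argument), the KL divergence of $P_0|_{[-L,T]}$ from the law with intensity $\lambda_1$ computed on the full path is at least that obtained after projecting onto the window filtration. More simply, we take as reference the process with intensity $\lambda_1(t)$ evaluated on the full $P_0$ path. Girsanov's formula for point processes gives
\[
\KL = \E_0\int_{-L}^T\varphi(\lambda_0(t),\lambda_1(t))\,\dd t=(T+L)h
\]
by stationarity. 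The delicate issue is that this reference is not $Q_1$, since it still uses pre-$(-L)$ points through $\lambda_1$. We will use it as an intermediate comparison: $\lambda_1$ evaluated on a path whose pre-$(-L)$ part is arbitrary defines the process $P_1$ run from a given initial history. Pinsker's inequality then yields the first term $\sqrt{\tfrac12(T+L)h}$, since the KL divergence on $[0,T]$ is at most that on $[-L,T]$. Here we condition on the pre-$(-L)$ history, apply Pinsker conditionally, and then use Jensen's inequality.

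\textbf{Coupling step.} It remains to compare the $\nu_1$-dynamics started from an arbitrary history before $-L$ (distributed as under $P_0$) with the stationary $P_1$. Both processes have baseline $\mu_1$. We use the cluster/thinning coupling: a single Poisson embedding drives both, and the two paths on $[0,T]$ differ only if some point in $[0,T]$ descends from an influence of the past before $-L$. The expected number of such points is bounded by the expected integral over $[0,T]$ of the excess intensity generated by pre-$(-L)$ points, propagated through the resolvent $\Psi_1$. Since every mode of $\phi_1$ decays at rate $>r_0$, we have $\phi_1(t)\le \rho_1 r_0$-type bounds; more precisely, $\int_s^\infty\phi_1\le\rho_1e^{-r_0 s}$. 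Moreover, $\Psi_1$ inherits exponential decay at rate $\delta=r_0(1-\rho_1)/2$ by a Laplace-transform argument: $\widehat\phi_1(-\delta)\le\int\frac{r}{r-\delta}\nu_1(\dd r)\le\rho_1/(1-\delta/r_0)<1$ for this $\delta$. The mean intensity of the past is $\lb$, so the expected number of discrepant points is at most of order
\[
\lb\int_{-\infty}^{-L}\int_0^\infty \rho_1 e^{-\delta(u-s)}\,\dd u\,\dd s\cdot\frac{1}{1-\rho_1}\lesssim\frac{\rho_1\lb}{(1-\rho_1)\delta}e^{-\delta L}.
\]
This matches the second term of~\eqref{eq:coupling} up to the constant $4$, which should come from the factors $(1-\delta/r_0)^{-1}\le2$ and the two boundary terms (both processes differ from a common empty start). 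Markov's inequality then bounds the total variation distance.

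\textbf{Main obstacle.} The hard part is making the entropy step rigorous with an unobserved, non-stationary history. One must justify that the divergence between ``$P_0$ on $[-L,T]$'' and ``$\nu_1$-dynamics started from $P_0$'s own pre-$(-L)$ history'' equals $\E_0\int\varphi$, with both intensities computed from the same full path. This holds because conditioning on the common past, the two processes on $[-L,T]$ are absolutely continuous with density given by Girsanov's formula. The conditional entropies then integrate to $(T+L)h$, and Pinsker's inequality combined with Jensen's inequality passes to the marginal on $[0,T]$.
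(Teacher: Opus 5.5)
Your overall route is the paper's. The reference law agrees with $P_0$ up to time $-L$ and then follows the $\nu_1$-dynamics, with intensity computed on the whole path; this is the paper's $P_1'$, not $Q_1$. So you can drop the opening plan of comparing $P_0^T$ with $Q_1^T$, and also the remark about convexity and projecting onto the window filtration. The chain rule and the point-process likelihood formula then give $\KL=(T+L)h$, followed by Pinsker; your conditional Pinsker plus Jensen is equivalent to the paper's unconditional version. Finally, a cluster coupling for the $\nu_1$-dynamics uses $q=\int\phi_1(t)e^{\delta t}\,\dd t\le 2\rho_1/(1+\rho_1)<1$.

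The step that fails as written is the final count in the coupling step. The integrand $\rho_1e^{-\delta(u-s)}$ is dimensionless, so it cannot bound the descendant density $\Psi_1(u-s)$. You appear to have inserted the tail-mass bound $\int_s^\infty\phi_1\le\rho_1e^{-r_0s}$ where a density is needed; a pointwise bound on $\Psi_1$ would involve $\phi_1(0)=\int r\,\nu_1(\dd r)$. Your double integral actually evaluates to $\rho_1\lb e^{-\delta L}/((1-\rho_1)\delta^2)$, not a multiple of $\delta^{-1}e^{-\delta L}$.

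The extra factor $1/\delta$ is not cosmetic. Since $\delta\asymp r_T\to0$, with $\delta L=\log T$ it turns the boundary term $O(1/(r_TT))$ used in the proof of Theorem~\ref{thm:main} into $O(1/(r_T^2T))$. For power-law envelopes with $\gamma<1/2$ that bound diverges.

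The fix is to work with tail masses and integrate only once over the ancestor's position:
\begin{itemize}
\item Start from $\int\Psi_1(u)e^{\delta u}\,\dd u=\sum_{n\ge1}q^n=q/(1-q)\le 2\rho_1/(1-\rho_1)$.
\item Markov's inequality then gives $\Psi_1([v,\infty))\le\frac{2\rho_1}{1-\rho_1}e^{-\delta v}$.
\item A point at distance $v\ge L$ before time $0$ has at most $\Psi_1([v,\infty))$ expected descendants in $[0,\infty)$.
\item Integrating against the density $\lb$ over $v\ge L$ gives $\frac{2\rho_1\lb}{(1-\rho_1)\delta}e^{-\delta L}$ for each of $P_1'$ and $P_1$.
\end{itemize}
Both have past density $\lb$: under $P_1'$ because the past is that of $P_0$, under $P_1$ by Lemma~\ref{lem:twopoint}(a). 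Summing the two gives the constant $4$.

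You should also justify attaching full $\phi_1$-cascades to every pre-$(-L)$ point. This over-counts: in $P_1'$ some chains pass through times before $-L$, and in $P_1$ a pre-$(-L)$ point may be counted both on its own and as a descendant. Over-counting only strengthens the bound.
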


\begin{proof}
Let $P_1'$ be the law of the point process that coincides with the $P_0$-process on $(-\infty,-L]$ and, on $(-L,\infty)$, has intensity $\mu_1+\int\phi_1(t-s)N(\dd s)$, the integral running over the whole past, including points before $-L$.

\emph{Step 1: $P_0$ versus $P_1'$.} Let $\mathcal H_{-L}$ be the history up to time $-L$. The two laws agree on $\mathcal H_{-L}$, so by the chain rule
\[
\KL\big(P_0|_{(-\infty,T]}\,\big\|\,P_1'|_{(-\infty,T]}\big)=\E_0\Big[\KL\big(P_0(\cdot\mid\mathcal H_{-L})\,\big\|\,P_1'(\cdot\mid\mathcal H_{-L})\big)\Big],
\]
where the conditional laws are those of the points in $(-L,T]$. Given $\mathcal H_{-L}$, both conditional laws are described on $(-L,T]$ by intensities with respect to the same full-history filtration, $\lambda_0$ and $\lambda_1\ge\mu_1>0$. By the likelihood formula for point processes \citep{Jacod1975}, the conditional divergence equals $\E_0[\int_{-L}^{T}\varphi(\lambda_0(t),\lambda_1(t))\,\dd t\mid\mathcal H_{-L}]$. Taking expectations and using stationarity of $P_0$ gives $(T+L)\,h$. Restricting to $[0,T]$ does not increase the divergence, and Pinsker's inequality gives $\TV(P_0^T,P_1'^T)\le\sqrt{(T+L)h/2}$.

\emph{Step 2: $P_1'$ versus $P_1$.} Both laws follow the $P_1$ dynamics on $(-L,\infty)$. By the cluster representation, the restriction to $[0,T]$ of either process is the superposition of (i) the clusters of immigrants arriving in $(-L,T]$ and (ii) the descendants, through cascades of $\phi_1$, of points located before $-L$. Part (i) has the same law under $P_1'$ and $P_1$ and is coupled identically. Hence the total variation is at most the probability that part (ii) places a point in $[0,T]$ in either process.

By Lemma~\ref{lem:twopoint}(c) and since $r\mapsto r/(r-\delta)$ decreases on $(r_0,\infty)$,
\[
q:=\int_0^\infty\phi_1(t)e^{\delta t}\,\dd t=\int\frac{r}{r-\delta}\,\nu_1(\dd r)\le\frac{\rho_1\,r_0}{r_0-\delta}=\frac{2\rho_1}{1+\rho_1}<1 .
\]
Thus $\int\Psi_1(u)e^{\delta u}\dd u\le q/(1-q)=2\rho_1/(1-\rho_1)$ and, by Markov's inequality, $\Psi_1([v,\infty))\le\frac{2\rho_1}{1-\rho_1}e^{-\delta v}$. Points before $-L$ have mean density $\lb$ under both laws. Attaching to each of them its full $\phi_1$-cascade over-counts part (ii): some chains pass through times before $-L$, where the process is not driven by $\phi_1$ in $P_1'$, and in $P_1$ a point before $-L$ may be counted both on its own and as a descendant of an earlier point. Over-counting only strengthens the bound, so the expected number of points of part (ii) in $[0,\infty)$ is at most
\[
\lb\int_L^\infty\Psi_1([v,\infty))\,\dd v\le\frac{2\rho_1\lb}{(1-\rho_1)\delta}\,e^{-\delta L}
\]
for each process. Combining both steps with the triangle inequality proves~\eqref{eq:coupling}. \qedhere
\end{proof}

\begin{remark}
The key property is Lemma~\ref{lem:twopoint}(c): the alternative has no relaxation rates at or below $r_0$ and therefore forgets its past exponentially fast. A two-sided tail condition in the class would force one to keep part of the slow mass in the alternative and would destroy this property. This is why $\Cg$ is defined through the upper bound~\eqref{eq:tailcond} only.
\end{remark}

\subsection{Proof of Theorem~\ref{thm:main}}

Let $r_0=r_T$ and $L=L(T)=2\log T/(r_T(1-\rho_1))$. By~\eqref{eq:nulltail}, $c_0H(r_T)\le m\le C_0H(r_T)$, so Proposition~\ref{prop:rate} and~\eqref{eq:rT} give $Th\le T\,C_0^2H(r_T)^2r_T/(1-\rho_0)^3=1/4$. By~\eqref{eq:rT}, $Tr_T=\kappa_0/H(r_T)^2$, hence
\[
\frac{L}{T}=\frac{2\,H(r_T)^2\log T}{\kappa_0(1-\rho_1)}\to0
\]
by assumption, and $(T+L)h\le\tfrac14(1+o(1))$. Moreover $\delta L=\log T$, so the second term in~\eqref{eq:coupling} is $O\big(1/(r_TT)\big)=O\big(H(r_T)^2\big)=o(1)$. Hence $\TV(P_0^T,P_1^T)\le 8^{-1/2}+o(1)$. Since $\mu_0<\mu_{\max}$ and $m\to0$, Lemma~\ref{lem:twopoint}(b) shows that both hypotheses lie in $\CH$ for $T$ large, and $|\rho_0-\rho_1|=m$. Le Cam's two-point lemma \citep[Section~2.4]{Tsybakov2009} yields, for every estimator,
\[
\max_{j=0,1}P_j\big(|\hat\rho-\rho_j|\ge m/2\big)\ge\frac{1-\TV(P_0^T,P_1^T)}{2}\ge\frac14\quad\text{for $T$ large},
\]
so $\mathcal R_T(\CH)\ge m/8\ge\tfrac18c_0H(r_T)$ for $T$ large, which is~\eqref{eq:mainbound}. Corollary~\ref{cor:power} follows by solving~\eqref{eq:rT}; the condition $H(r_T)^2\log T\to0$ holds because $H(r_T)^2\asymp T^{-2\gamma/(2\gamma+1)}$. \qed

\subsection{Proof of Theorem~\ref{thm:upper}}\label{sec:proofupper}

\begin{lemma}[Spectral bias]\label{lem:bias}
There is $B<\infty$, depending only on $\gamma$ and the constants of $\Cg$, such that $|f(\omega)-f(0)|\le B|\omega|^\gamma$ for all $\omega\in\R$ and all $(\mu,\nu)\in\Cg$. Consequently, for every $W>0$,
\[
\Big|\frac{\mathrm{Var}\,N((0,W])}{W}-f(0)\Big|\le B\,\kappa_\gamma\,W^{-\gamma},\qquad \kappa_\gamma=\frac1{2\pi}\int_\R|u|^{\gamma-2}\,4\sin^2(u/2)\,\dd u<\infty .
\]
\end{lemma}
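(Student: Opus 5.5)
Two claims need proof: a uniform Hölder-type bound $|f(\omega)-f(0)|\le B|\omega|^\gamma$, and then the variance consequence.

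\emph{Difference of spectral densities.} Write $g(\omega)=1-\widehat\phi(\omega)$. Then
\[
f(\omega)-f(0)=\lb\,\frac{|g(0)|^2-|g(\omega)|^2}{|g(\omega)|^2|g(0)|^2}.
\]
The denominator is at least $(1-\rho_{\max})^4$, because $|g|\ge1-\rho$, and $\lb\le\lambda_{\max}$. Both $|g(0)|$ and $|g(\omega)|$ are at most $2$, so
\[
\big||g(0)|^2-|g(\omega)|^2\big|\le 4|g(0)-g(\omega)|=4|\widehat\phi(\omega)-\widehat\phi(0)|.
\]
It therefore suffices to bound $|\widehat\phi(\omega)-\rho|$ by $C|\omega|^\gamma$ uniformly over the class.

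\emph{Hölder bound on $\widehat\phi$.} By \eqref{eq:bartlett},
\[
\widehat\phi(\omega)-\rho=\int\Big(\frac{r}{r+i\omega}-1\Big)\nu(\dd r)=-\int\frac{i\omega}{r+i\omega}\,\nu(\dd r),
\]
so $|\widehat\phi(\omega)-\rho|\le\int\min(1,|\omega|/r)\,\nu(\dd r)$. Assume first $|\omega|<r_\star$ and split the integral at $r=|\omega|$.
\begin{itemize}
\item On $(0,|\omega|]$ the integrand is at most $1$, and \eqref{eq:tailcond} bounds this part by $c_+|\omega|^\gamma$.
\item On $(|\omega|,r_\star)$, integrate by parts (the layer-cake identity $\int_{(a,b)}r^{-1}\nu(\dd r)\le b^{-1}\nu((0,b))+\int_a^b s^{-2}\nu((0,s])\,\dd s$). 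This gives
\[
|\omega|\int_{(|\omega|,r_\star)}r^{-1}\nu(\dd r)\le |\omega|\Big(c_+r_\star^{\gamma-1}+c_+\int_{|\omega|}^{\infty}s^{\gamma-2}\dd s\Big)\le C|\omega|^\gamma.
\]
Here $\gamma<1$ is used, and $|\omega|\le|\omega|^\gamma r_\star^{1-\gamma}$.
\item On $[r_\star,\infty)$ the part is at most $|\omega|\rho_{\max}/r_\star\le C|\omega|^\gamma$.
\end{itemize}
For $|\omega|\ge r_\star$ the trivial bound $|f(\omega)-f(0)|\le 2\lambda_{\max}/(1-\rho_{\max})^2\le C|\omega|^\gamma$ holds. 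The constant $B$ thus depends only on $\gamma,c_+,r_\star,\rho_{\max},\mu_{\max}$.

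\emph{Variance consequence.} Apply \eqref{eq:varformula} with $\psi=\mathbf 1_{[0,W)}$, for which $|\widehat\psi(\omega)|^2=4\sin^2(\omega W/2)/\omega^2$. Since $\frac1{2\pi}\int|\widehat\psi|^2\dd\omega=W$ by Plancherel,
\[
\frac{\mathrm{Var}\,N((0,W])}{W}-f(0)=\frac1{2\pi W}\int\frac{4\sin^2(\omega W/2)}{\omega^2}\big(f(\omega)-f(0)\big)\dd\omega.
\]
Bound $|f(\omega)-f(0)|$ by $B|\omega|^\gamma$ and substitute $u=\omega W$. This yields $B\kappa_\gamma W^{-\gamma}$ exactly.

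Finiteness of $\kappa_\gamma$ follows from two estimates. Near $0$ the integrand is $\sim|u|^\gamma$. At infinity it is $O(|u|^{\gamma-2})$, which is integrable since $\gamma<1$.

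The only delicate step is the uniform Hölder bound on $\widehat\phi$, which must use the envelope \eqref{eq:tailcond} through the integration by parts rather than any density of $\nu$.
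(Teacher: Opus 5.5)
Your proof is correct and follows the paper's argument essentially step for step. It uses the same reduction $|f(\omega)-f(0)|\le 4\lb(1-\rho_{\max})^{-4}|\widehat\phi(0)-\widehat\phi(\omega)|$, the same split of $\int\min(1,|\omega|/r)\,\nu(\dd r)$ at $r=|\omega|$ with integration by parts against the envelope, and the same substitution $u=\omega W$ in the variance identity; the only differences are cosmetic (you bound $f$ directly for $|\omega|\ge r_\star$ and derive the variance identity from \eqref{eq:varformula} with $\psi=\mathbf 1_{[0,W)}$ instead of citing it).
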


\begin{proof}
Since $\widehat\phi(0)-\widehat\phi(\omega)=\int\nu(\dd r)\,i\omega/(r+i\omega)$, we have $|\widehat\phi(0)-\widehat\phi(\omega)|\le\int\nu(\dd r)\min(1,|\omega|/r)$. For $|\omega|<r_\star$, split at $r=|\omega|$. The part $r\le|\omega|$ is at most $c_+|\omega|^\gamma$. For the part $r>|\omega|$, integration by parts and~\eqref{eq:tailcond} give
\[
|\omega|\int_{(|\omega|,\infty)}\frac{\nu(\dd r)}{r}\le|\omega|\Big(c_+r_\star^{\gamma-1}+c_+\int_{|\omega|}^{r_\star}r^{\gamma-2}\dd r+\frac{\rho_{\max}}{r_\star}\Big)\le B_1|\omega|^\gamma .
\]
For $|\omega|\ge r_\star$ the difference is at most $2\rho_{\max}\le(2\rho_{\max}r_\star^{-\gamma})|\omega|^\gamma$. Finally, $|1-\widehat\phi|\ge1-\rho_{\max}$ and $|1-\widehat\phi|\le2$ give
\[
|f(\omega)-f(0)|=\lb\,\frac{\big||1-\widehat\phi(0)|^2-|1-\widehat\phi(\omega)|^2\big|}{|1-\widehat\phi(\omega)|^2|1-\widehat\phi(0)|^2}\le\frac{4\lb}{(1-\rho_{\max})^4}\,|\widehat\phi(0)-\widehat\phi(\omega)| .
\]
For the second statement, recall \citep{DaleyVereJones2003} that
\[
\mathrm{Var}\,N((0,W])=\frac1{2\pi}\int_\R f(\omega)\,\frac{4\sin^2(\omega W/2)}{\omega^2}\,\dd\omega,\qquad \frac1{2\pi}\int_\R\frac{4\sin^2(\omega W/2)}{\omega^2}\,\dd\omega=W .
\]
Hence the left-hand side equals $\frac1{2\pi W}\int(f(\omega)-f(0))\,4\sin^2(\omega W/2)\,\omega^{-2}\dd\omega$, and the substitution $u=\omega W$ gives the bound; $\kappa_\gamma<\infty$ because $\gamma\in(0,1)$. \qedhere
\end{proof}

\begin{lemma}[Stochastic error]\label{lem:var}
Let $S$ be the total size of a cluster (immigrant and all descendants) and $s_k=\mu\,\E[S^k]$. With $Y_j=N_j-W\lb$ and $V_0=T'^{-1}\sum_jY_j^2$,
\[
\E V_0=\frac{\mathrm{Var}\,N((0,W])}{W},\qquad \mathrm{Var}(V_0)\le\frac{2Ws_2^2+s_4}{T'},\qquad \mathrm{Var}(\hat\lambda)\le\frac{s_2}{T'},
\]
\[
\E(\hat\lambda-\lb)^4\le\frac{3s_2^2}{T'^2}+\frac{s_4}{T'^3},\qquad \mathrm{Var}(\hat V)\le\frac{2(2Ws_2^2+s_4)}{T'}+2W^2\Big(\frac{3s_2^2}{T'^2}+\frac{s_4}{T'^3}\Big)\le\frac{C_VW}{T'},
\]
with $C_V=10s_2^2+4s_4$, which is bounded uniformly over the class. Moreover $\sup_{\rho\le\rho_{\max}}\E[S^k]<\infty$ for every $k$.
\end{lemma}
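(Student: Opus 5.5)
The plan is to work entirely through the cluster representation. Under the stationary law, the points of $N$ form a superposition of independent clusters. Each cluster is started by an immigrant from a homogeneous Poisson process of rate $\mu$ on $\R$ and has i.i.d.\ shape. Every count $N(A)$ is therefore a sum $\sum_c S_c(A)$ over the clusters $c$, where $S_c(A)$ is the number of points of cluster $c$ falling in $A$. All moments of counts can then be computed from cumulants of a marked Poisson process: by Campbell's formula, the $k$-th joint cumulant of $(N(A_1),\dots,N(A_k))$ equals $\mu\int\E[\prod_i S_x(A_i)]\,\dd x$, where $S_x$ denotes a cluster with immigrant at $x$.

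Bounding $S_x(A_i)\le S$ and integrating over $x$ bounds the relevant cumulants by $s_k$-type quantities. The $x$-integral of $\E[S_x(A)]$ contributes at most $|A|\,\E S$ only for the first cumulant. For the other moments one uses the cruder bound $\int\E[\prod_iS_x(A_i)]\,\dd x\le\E[S^{k-1}\cdot\text{(extent)}]$. Here I will be careful to use $\sum_j S_x(B_j)\le S$ across disjoint blocks, which is what produces $T'$-normalised bounds without any extent factor.

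The identity $\E V_0=\mathrm{Var}\,N((0,W])/W$ is immediate from stationarity, since $\E Y_j^2=\mathrm{Var}\,N_j$ and there are $M=T'/W$ blocks. The bound on $\mathrm{Var}(\hat\lambda)$ follows from the second cumulant: $\mathrm{Var}\,N((0,T'])=\mu\int\E[S_x((0,T'])^2]\dd x\le\mu\,\E\big[S\int S_x((0,T'])\dd x\big]=T'\mu\E S^2$, using $\int S_x(A)\,\dd x=|A|\,S$ by translation. Dividing by $T'^2$ gives $s_2/T'$.

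For the fourth moment of $\hat\lambda-\lb$, write the centred fourth moment as $3\kappa_2^2+\kappa_4$. The same translation trick gives $\kappa_4\le\mu\E[S^3\int S_x\,\dd x]=T's_4$, which yields $3s_2^2/T'^2+s_4/T'^3$.

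For $\mathrm{Var}(V_0)$, expand $\mathrm{Var}(\sum_jY_j^2)=\sum_{j,k}\mathrm{Cov}(Y_j^2,Y_k^2)$. Express each covariance through cumulants of centred variables: $\mathrm{Cov}(Y_j^2,Y_k^2)=2\kappa(Y_j,Y_k)^2+\kappa_4(Y_j,Y_j,Y_k,Y_k)$. The fourth-cumulant part summed over $j,k$ is $\mu\int\E[(\sum_jS_x(B_j)^2)^2]\dd x\le\mu\int\E[S^3 S_x((0,T'])]\dd x=T's_4$. The squared-covariance part is handled by Cauchy--Schwarz style bookkeeping: $\sum_{j,k}\kappa(Y_j,Y_k)^2\le\max_j\sum_k|\kappa(Y_j,Y_k)|\cdot\sum_{j,k}|\kappa(Y_j,Y_k)|$. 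Both sums are controlled by $\mu\E[S\,S_x(\cdot)]$-integrals. One has $\sum_k|\kappa(Y_j,Y_k)|\le\mu\int\E[S_x(B_j)S]\dd x=Ws_2$, and $\sum_{j,k}\le T's_2$. Together these give $2Ws_2\cdot T's_2$, and dividing by $T'^2$ yields $(2Ws_2^2+s_4)/T'$. Positivity of cluster cumulants (all $S_x\ge0$) removes the absolute values.

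For $\hat V$, write $\hat V=V_0-W(\hat\lambda-\lb)^2$, which holds since $\sum_jY_j=T'(\hat\lambda-\lb)$ and $\sum_j(Y_j-W(\hat\lambda-\lb))^2=\sum Y_j^2-MW^2(\hat\lambda-\lb)^2$. Then apply $\mathrm{Var}(a-b)\le2\mathrm{Var}a+2\E b^2$. Using $W/T'\le1$, the terms combine into $C_VW/T'$ with $C_V=10s_2^2+4s_4$, after checking $s_k\ge$ the relevant powers so that the constants absorb.

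Finally, I need uniform moments of $S$. $S$ is the total progeny of a Galton--Watson process with Poisson$(\rho)$ offspring, since the number of direct children of a point is Poisson with mean $\int\phi=\rho$ regardless of the kernel. Poisson offspring with $\rho\le\rho_{\max}<1$ has all exponential moments: the Borel distribution satisfies $\E e^{\theta S}<\infty$ for $\theta<\rho-1-\log\rho$. Hence $\E S^k$ is bounded by its value at $\rho_{\max}$, using monotone coupling in $\rho$. The main obstacle I expect is the careful combinatorics in $\mathrm{Var}(V_0)$, namely getting exactly the factor $2Ws_2^2$ rather than a larger constant. If that proves messy, I would accept a slightly worse absolute constant, which is harmless for all later uses.
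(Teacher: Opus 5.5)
Your proposal is correct and follows the paper's proof essentially step by step. It uses the same ingredients: the cluster-cumulant formula, the translation identity $\int N_s(A)\,\dd s=S|A|$ together with $N_s(A)^k\le S^{k-1}N_s(A)$, the decomposition $\mathrm{Cov}(Y_j^2,Y_k^2)=2\,\mathrm{Cov}(Y_j,Y_k)^2+\kappa(Y_j,Y_j,Y_k,Y_k)$, the identity $\hat V=V_0-W(\hat\lambda-\lb)^2$, and Borel-law tails with monotone coupling; your row-sum bound $\sum_k\mathrm{Cov}(Y_j,Y_k)\le Ws_2$, used in place of the paper's entrywise bound, gives the same constant. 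The one misstatement is in the final absorption into $C_VW/T'$: what is needed there is $1\le W\le T'$, so that $2s_4/T'$ and $2W^2s_4/T'^3$ are at most $2Ws_4/T'$, and no inequality among the $s_k$ can replace the condition $W\ge1$.
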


\begin{proof}
\emph{Cluster sizes.} In the cluster representation, $S$ is the total progeny of a Galton--Watson process with Poisson($\rho$) offspring, which has the Borel law $\PP(S=n)=e^{-\rho n}(\rho n)^{n-1}/n!$. Since $\rho e^{1-\rho}<1$ for $\rho<1$, Stirling's formula shows that $\PP(S=n)$ decays geometrically, so $S$ has moments of all orders. A standard coupling of Galton--Watson trees shows that $S$ is stochastically increasing in $\rho$, hence $\E[S^k]\le\E_{\rho_{\max}}[S^k]<\infty$.

\emph{Cumulants.} Let $N_s$ denote the (translated) cluster whose immigrant is at $s$. The Laplace functional of the Poisson cluster process is $\log\E\exp(-\int g\,\dd N)=\mu\int\big(\E\exp(-\int g\,\dd N_s)-1\big)\dd s$ for measurable $g\ge0$ \citep[Section~6.3]{DaleyVereJones2003}. Taking $g=\sum_it_i\mathbf 1_{A_i}$ and differentiating at $t=0$ gives, for bounded sets $A_1,\dots,A_k$,
\begin{equation}\label{eq:clustercum}
\kappa\big(N(A_1),\dots,N(A_k)\big)=\mu\int_\R\E\Big[\prod_{i=1}^kN_s(A_i)\Big]\dd s\ \ge0 .
\end{equation}

\emph{A moment bound.} Pathwise, $\int_\R N_s(A)\,\dd s=\sum_{x}\int\mathbf 1\{s+x\in A\}\dd s=S|A|$, the sum running over the relative positions $x$ of the $S$ cluster points; and $N_s(A)\le S$, so $N_s(A)^k\le S^{k-1}N_s(A)$. Hence
\[
\mu\int_\R\E[N_s(A)^k]\,\dd s\le\mu\,\E\Big[S^{k-1}\int_\R N_s(A)\,\dd s\Big]=|A|\,s_k .
\]
With $k=2$ and~\eqref{eq:clustercum} this gives $\mathrm{Var}\,N(A)\le|A|s_2$, hence the bound on $\hat\lambda$, and $\E V_0=\mathrm{Var}\,N((0,W])/W$ by stationarity.

\emph{Variance of $V_0$.} For centred variables, $\mathrm{Cov}(Y_j^2,Y_k^2)=2\,\mathrm{Cov}(Y_j,Y_k)^2+\kappa(Y_j,Y_j,Y_k,Y_k)$, and cumulants of order at least two are unchanged by centring. By~\eqref{eq:clustercum} all covariances are non-negative, and $\mathrm{Cov}(Y_j,Y_k)\le\max_i\mathrm{Var}\,Y_i\le Ws_2$. Therefore
\[
\sum_{j,k}\mathrm{Cov}(Y_j,Y_k)^2\le Ws_2\sum_{j,k}\mathrm{Cov}(Y_j,Y_k)=Ws_2\,\mathrm{Var}\,N((0,T'])\le WT's_2^2 .
\]
For the fourth-order term, by~\eqref{eq:clustercum},
\[
\sum_{j,k}\kappa(Y_j,Y_j,Y_k,Y_k)=\mu\int\E\Big[\Big(\sum_jN_s(B_j)^2\Big)^2\Big]\dd s\le\mu\int\E\big[N_s((0,T'])^4\big]\dd s\le T's_4,
\]
using $\sum_jN_s(B_j)^2\le N_s((0,T'])^2$. Hence $\mathrm{Var}(\sum_jY_j^2)\le2WT's_2^2+T's_4$, and dividing by $T'^2$ gives the bound on $\mathrm{Var}(V_0)$.

\emph{Variance of $\hat V$.} For the count $N((0,T'])$, $\E(N-\E N)^4=3\,\mathrm{Var}(N)^2+\kappa_4(N)\le3T'^2s_2^2+T's_4$ by~\eqref{eq:clustercum} and the moment bound, which gives the bound on $\E(\hat\lambda-\lb)^4$ after division by $T'^4$. Since $\hat V=V_0-W(\hat\lambda-\lb)^2$, $\mathrm{Var}(\hat V)\le2\,\mathrm{Var}(V_0)+2W^2\E(\hat\lambda-\lb)^4$, and $W\le T'$ gives the last inequality. \qedhere
\end{proof}

\begin{proof}[Proof of Theorem~\ref{thm:upper}]
Since $\sum_jY_j=T'(\hat\lambda-\lb)$, we have $\hat V=V_0-W(\hat\lambda-\lb)^2$, and $\E[W(\hat\lambda-\lb)^2]\le Ws_2/T'$. By Lemmas~\ref{lem:bias} and~\ref{lem:var},
\[
\E|\hat V-f(0)|\le B\kappa_\gamma W^{-\gamma}+\sqrt{\frac{2Ws_2^2+s_4}{T'}}+\frac{Ws_2}{T'},\qquad \E|\hat\lambda-\lb|\le\sqrt{\frac{s_2}{T'}} .
\]
The true values satisfy $\lb\in[\mu_{\min},\lambda_{\max}]$, $f(0)=\lb/(1-\rho)^2\in[\mu_{\min},v_{\max}]$ and $\rho\in[0,\rho_{\max}]$, so neither the clipping nor the final projection increases the errors. On the rectangle $[\mu_{\min},\lambda_{\max}]\times[\mu_{\min},v_{\max}]$ the map $(\lambda,v)\mapsto1-\sqrt{\lambda/v}$ is Lipschitz with a constant depending only on the class, and $\rho=1-\sqrt{\lb/f(0)}$. Hence $\E|\hat\rho-\rho|$ is bounded by a constant times the sum of the two displayed errors. With $W=W_T$ and $T'\ge T/2$, each term is $O(T^{-\gamma/(2\gamma+1)})$, uniformly over $\Cg$. \qedhere
\end{proof}

\subsection{Proofs of Theorem~\ref{thm:clt} and its consequences}\label{sec:proofclt}

\parhead{Poisson embedding.} Let $\mathcal K$ be the space of finite point configurations on $[0,\infty)$ containing $0$, and $Q$ the law of a cluster seen from its immigrant. The stationary Hawkes process is $N=\sum_{(s,\kappa)\in\eta}\delta_{s+\kappa}$, where $\eta$ is a Poisson process on $\mathbb X=\R\times\mathcal K$ with intensity $\lambda(\dd s,\dd\kappa)=\mu\,\dd s\,Q(\dd\kappa)$ \citep{HawkesOakes1974}. Hence $F=\sum_{j=1}^MY_j^2$ is a square-integrable Poisson functional in the domain of $D$, and we can use the second order Poincar\'e inequality of \citet[Theorem~1.1]{LastPeccatiSchulte2016}: for $\tilde F=(F-\E F)/\sigma_W$,
\[
d_{\mathrm W}(\tilde F,\mathcal N(0,1))\le\gamma_1+\gamma_2+\gamma_3,
\]
where, with $D$ the difference (add-one-cost) operator,
\begin{align*}
\gamma_1&=\frac{2}{\sigma_W^2}\Big[\int\big(\E[(D_{x_1}F)^2(D_{x_2}F)^2]\big)^{1/2}\big(\E[(D^2_{x_1,x_3}F)^2(D^2_{x_2,x_3}F)^2]\big)^{1/2}\lambda^3(\dd x)\Big]^{1/2},\\
\gamma_2&=\frac{1}{\sigma_W^2}\Big[\int\E[(D^2_{x_1,x_3}F)^2(D^2_{x_2,x_3}F)^2]\,\lambda^3(\dd x)\Big]^{1/2},\qquad
\gamma_3=\frac{1}{\sigma_W^3}\int\E|D_xF|^3\,\lambda(\dd x).
\end{align*}

\parhead{Difference operators.} For $x=(s,\kappa)$ write $n_j(x)$ for the number of points of $s+\kappa$ in $B_j$ and $S_x=|\kappa|$. Adding a cluster adds $n_j(x)$ to each $N_j$, so
\[
D_xF=\sum_j\big(2Y_jn_j(x)+n_j(x)^2\big),\qquad D^2_{x,y}F=2\sum_jn_j(x)\,n_j(y),
\]
and $D^2F$ is deterministic. We use three facts: $\sum_jn_j(x)\le S_x$; $\int_\R n_j(s,\kappa)\,\dd s=|\kappa|\,W$ for every block; and, from Lemma~\ref{lem:var}, $\E Y_j^4\le3W^2s_2^2+Ws_4\le CW^2$ and $\E[S^k]\le C_k$ uniformly over $\Cg$. Constants $C$ below depend only on the class and may change from line to line.

\parhead{Variance.} By~\eqref{eq:clustercum} all terms in $\sigma_W^2=\sum_{j,k}\{2\,\mathrm{Cov}(Y_j,Y_k)^2+\kappa(Y_j,Y_j,Y_k,Y_k)\}$ are non-negative. Since $\mathrm{Var}\,N(A)\ge\mu\int\E N_s(A)\,\dd s=\lb|A|$, we get the lower bound $\sigma_W^2\ge2M(\lb W)^2\ge2\mu_{\min}^2\,T'W$. For the asymptotics, $\mathrm{Var}\,Y_j=Wf(0)(1+O(W^{-\gamma}))$ by Lemma~\ref{lem:bias}. Moreover, the covariance measure of $N$, including its diagonal part, is non-negative and has total mass $f(0)$ per unit time, so $\sum_k\mathrm{Cov}(Y_j,Y_k)\le Wf(0)$, so $\sum_{k\ne j}\mathrm{Cov}(Y_j,Y_k)\le Wf(0)-\mathrm{Var}\,Y_j=O(W^{1-\gamma})$ and $\sum_{j\neq k}\mathrm{Cov}(Y_j,Y_k)^2\le M\cdot Wf(0)\cdot O(W^{1-\gamma})$. The fourth-cumulant part is at most $T's_4=O(MW)$ by Lemma~\ref{lem:var}. Hence $\sigma_W^2=2MW^2f(0)^2(1+O(W^{-\gamma}+W^{-1}))$, which is the second claim of Theorem~\ref{thm:clt}.

\parhead{Bound on $\gamma_3$.} Since $(a+b)^3\le4(a^3+b^3)$, and by Jensen's inequality with weights $n_j/S_x$, $(\sum_j|Y_j|n_j)^3\le S_x^2\sum_jn_j|Y_j|^3$, we have
\[
\E|D_xF|^3\le C\Big(S_x^2\sum_jn_j(x)\,\E|Y_j|^3+S_x^5\sum_jn_j(x)\Big)\le C\,W^{3/2}S_x^5\sum_jn_j(x),
\]
because the added cluster is independent of $\eta$ and $\E|Y_j|^3\le CW^{3/2}$. Integrating, $\int\sum_jn_j(s,\kappa)\,\dd s=|\kappa|T'$, so $\int\E|D_xF|^3\lambda(\dd x)\le C\mu\E[S^6]\,T'W^{3/2}$ and $\gamma_3\le CT'W^{3/2}/(T'W)^{3/2}=C\,T'^{-1/2}$.

\parhead{Bound on $\gamma_2$.} For fixed $x_3$, by Cauchy--Schwarz and $\sum_jn_j(x_1)\le S_{x_1}$,
\[
\int(D^2_{x_1,x_3}F)^2\lambda(\dd x_1)\le4\int S_{x_1}\sum_jn_j(x_1)\,n_j(x_3)^2\,\lambda(\dd x_1)=4\mu\E[S^2]\,W\sum_jn_j(x_3)^2\le4\mu\E[S^2]\,W\,S_{x_3}^2 .
\]
The same holds for $x_2$, and $\int S_{x_3}^4\mathbf 1\{n(x_3)\neq0\}\lambda(\dd x_3)\le\int S_{x_3}^4\sum_jn_j(x_3)\lambda(\dd x_3)=\mu\E[S^5]T'$. Hence the integral in $\gamma_2$ is at most $CW^2T'$, and $\gamma_2\le C\,W\sqrt{T'}/(T'W)=C\,T'^{-1/2}$.

\parhead{Bound on $\gamma_1$.} By Cauchy--Schwarz, $\E[(D_{x_1}F)^2(D_{x_2}F)^2]^{1/2}\le(\E(D_{x_1}F)^4)^{1/4}(\E(D_{x_2}F)^4)^{1/4}$, and as for $\gamma_3$, $(\E(D_xF)^4)^{1/4}\le C\,W^{1/2}S_x^2$. Since $D^2F$ is deterministic, the integrand is at most $CW\,S_{x_1}^2S_{x_2}^2\,|D^2_{x_1,x_3}F|\,|D^2_{x_2,x_3}F|$. Now $\int S_{x_1}^2|D^2_{x_1,x_3}F|\lambda(\dd x_1)\le2\mu\E[S^3]\,W\,S_{x_3}$, and similarly for $x_2$, so the integral is at most $CW^3\int S_{x_3}^2\mathbf 1\{n(x_3)\ne0\}\lambda(\dd x_3)\le CW^3T'$. Hence $\gamma_1\le C\,(W^3T')^{1/2}/(T'W)=C\,(W/T')^{1/2}$. Together with $T'\ge T/2$ this proves Theorem~\ref{thm:clt}.

\parhead{Proof of Corollary~\ref{cor:an}.} Write $V_0=F/T'$ and $f_0=f(0)$. By Theorem~\ref{thm:clt} and the inequality $d_{\mathrm K}\le2\sqrt{d_{\mathrm W}}$ for a standard normal target \citep{NourdinPeccati2012}, $Z_T=\sqrt{T'/(2W)}\,(V_0-\E V_0)/f_0$ satisfies $\sup_{\Cg}d_{\mathrm K}(Z_T,\mathcal N(0,1))\to0$. Next, $\hat V-f_0=(V_0-\E V_0)+(\E V_0-f_0)-W(\hat\lambda-\lb)^2$, where $|\E V_0-f_0|\le B\kappa_\gamma W^{-\gamma}$ (Lemma~\ref{lem:bias}) and $\E[W(\hat\lambda-\lb)^2]\le Ws_2/T'$. After multiplication by $\sqrt{T'/W}$ these are $O(\sqrt{T/W^{2\gamma+1}})=o(1)$ and $O_P(\sqrt{W/T})=o_P(1)$ uniformly, and $\sqrt{T'/W}\,|\hat\lambda-\lb|=O_P(W^{-1/2})$. On the event $\{|\hat V-f_0|\le f_0/2,\ |\hat\lambda-\lb|\le\lb/2\}$, whose probability tends to one uniformly by Chebyshev's inequality, a Taylor expansion of $g(\lambda,v)=1-\sqrt{\lambda/v}$ with $\partial_vg=(1-\rho)/(2f_0)$ gives
\[
\sqrt{\tfrac{2T'}{W}}\,\frac{\check\rho-\rho}{1-\rho}=Z_T+R_T,\qquad \sup_{\Cg}\PP(|R_T|>\varepsilon)\to0\ \text{for every }\varepsilon>0,
\]
and $(1-\check\rho)/(1-\rho)\to1$ in probability uniformly, since $1-\rho\ge1-\rho_{\max}$. The claim follows from $d_{\mathrm K}(Z+R,\mathcal N(0,1))\le d_{\mathrm K}(Z,\mathcal N(0,1))+\PP(|R|>\varepsilon)+\varepsilon$. \qed

\parhead{Proof of Corollary~\ref{cor:ci}.} The choice of $W_T$ satisfies the conditions of Corollary~\ref{cor:an}, since $TW_T^{-(2\gamma+1)}=\ell_T^{-(2\gamma+1)}\to0$ and $W_T/T\to0$. Since $f(0)\ge\lb\ge\mu_{\min}$, Chebyshev's inequality, Lemma~\ref{lem:var} and Lemma~\ref{lem:bias} give
\[
\PP(\mathcal R^c)\le\frac{4\{\mathrm{Var}(\hat V)+(\E\hat V-f(0))^2\}}{f(0)^2}+\frac{4\,\mathrm{Var}(\hat\lambda)}{\lb^2}\le C\Big(\frac{W_T}{T}+W_T^{-2\gamma}\Big),
\]
uniformly over $\Cg$. On $\mathcal R^c$ the interval contains $\rho$, and on $\mathcal R$ the clipping of $\hat s$ is inactive with probability tending to one uniformly, since $1-\rho\in[1-\rho_{\max},1]$ and $\sqrt{\hat\lambda/\hat V}\to1-\rho$ uniformly in probability. Coverage therefore follows from Corollary~\ref{cor:an}. For the length, $|I_T|\le2z_{1-\alpha/2}\sqrt{W_T/(2T')}$ on $\mathcal R$ and $|I_T|\le1$ on $\mathcal R^c$, so
\[
\E|I_T|\le2z_{1-\alpha/2}\sqrt{W_T/T}+C\big(W_T/T+W_T^{-2\gamma}\big)\le C\,T^{-\gamma/(2\gamma+1)}\ell_T^{1/2},
\]
because $W_T/T\le\sqrt{W_T/T}$ and $W_T^{-2\gamma}=T^{-2\gamma/(2\gamma+1)}\ell_T^{-2\gamma}\le\sqrt{W_T/T}$. \qed

\parhead{Proof of Proposition~\ref{prop:cilength}.} Let $P_0,P_1$ be the two points of Section~\ref{sec:proofs} with $|\rho_0-\rho_1|=m\ge c_0r_T^\gamma$ and $\tau=\TV(P_0^T,P_1^T)\le8^{-1/2}+o(1)$. On the event $A=\{|I_T|<m\}$ the interval cannot contain both $\rho_0$ and $\rho_1$. Hence $P_1(\rho_1\in I_T,A)\le P_0(\rho_1\in I_T,A)+\tau\le P_0(\rho_0\notin I_T)+\tau\le\alpha+\tau$, and $P_1(A^c)\ge P_1(\rho_1\in I_T)-P_1(\rho_1\in I_T,A)\ge1-2\alpha-\tau$. Therefore $\E_1|I_T|\ge m(1-2\alpha-\tau)$, which is bounded below by a positive multiple of $T^{-\gamma/(2\gamma+1)}$ when $\alpha<(1-8^{-1/2})/2$. \qed

\subsection{Proofs of Theorems~\ref{thm:adapt} and~\ref{thm:price}}\label{sec:proofadapt}

\parhead{Proof of Theorem~\ref{thm:adapt}.} Constants below are uniform over $\gamma\in[\gamma_-,\gamma_+]$ and $\Cg$; in particular the constant $B\kappa_\gamma$ of Lemma~\ref{lem:bias} is bounded on $[\gamma_-,\gamma_+]$. Let $b_k=B\kappa_\gamma W_k^{-\gamma}$. By Lemma~\ref{lem:bias} and the identity $\hat V=V_0-W(\hat\lambda-\lb)^2$, $|\E\hat V_k-f(0)|\le b_k+W_ks_2/T'\le b_k+\bar\sigma_k$ for $T$ large, since $W_k/T\le\sqrt{W_k/T}$.

\emph{Good event.} Let $G$ be the event that $|\hat V_k-\E\hat V_k|\le x_T\bar\sigma_k$ and $|\hat\lambda_k-\lb|\le x_T\sqrt{s_2/T'_k}$ for all $k$. By Chebyshev's inequality, Lemma~\ref{lem:var} and a union bound over the $K_T-k_0+1=O(\log T)$ grid points, $\PP(G^c)\le C\log T/x_T^2=O((\log T)^{-\varepsilon})$, uniformly.

\emph{Oracle index.} Let $k^*=\min\{k:\ b_k\le x_T\bar\sigma_k\}$. Since $b_k$ decreases and $\bar\sigma_k$ increases in $k$, $b_l\le x_T\bar\sigma_l$ for all $l\ge k^*$. On $G$, for $l>k\ge k^*$,
\[
|\hat V_k-\hat V_l|\le|\hat V_k-f(0)|+|\hat V_l-f(0)|\le(b_k+2x_T\bar\sigma_k)+(b_l+2x_T\bar\sigma_l)\le6x_T\bar\sigma_l,
\]
so $k^*$ satisfies the condition in~\eqref{eq:lepski} and $\hat k\le k^*$. If $\hat k<k^*$, the rule applied with $l=k^*$ gives $|\hat V_{\hat k}-\hat V_{k^*}|\le6x_T\bar\sigma_{k^*}$, hence $|\hat V_{\hat k}-f(0)|\le6x_T\bar\sigma_{k^*}+3x_T\bar\sigma_{k^*}=9x_T\bar\sigma_{k^*}$; the same bound holds trivially if $\hat k=k^*$.

\emph{Size of the oracle error.} Let $W^*$ solve $B\kappa_\gamma W^{-\gamma}=x_T\sqrt{C_VW/T}$, that is $W^*\asymp(T/x_T^2)^{1/(2\gamma+1)}$, which lies inside the grid for $T$ large. By minimality of $k^*$, $W_{k^*}\le2W^*$, so $x_T\bar\sigma_{k^*}\le\sqrt2\,x_T\sqrt{C_VW^*/T}\asymp(x_T^2/T)^{\gamma/(2\gamma+1)}$. Finally, on $G$ the map $(\lambda,v)\mapsto1-\sqrt{\lambda/v}$ is Lipschitz on the clipping rectangle, and $|\hat\lambda_{\hat k}-\lb|\le x_T\sqrt{2s_2/T}$ is of smaller order. \qed

\parhead{Proof of Theorem~\ref{thm:price}.} Fix $r_0$ small and set $\nu_s=\nu^{(2)}|_{(r_0,\infty)}$, $\nu_r=\nu_s+\nu^{(1)}|_{(0,r_0]}$, $m=\nu^{(1)}((0,r_0])$, $\mu_r=\mu_s-\lb m$, where $\lb=\mu_s/(1-\rho_s)$. Then $\rho_r=\rho_s+m$, both laws have mean intensity $\lb$, $P_s\in\mathcal C_{\gamma_2}\subset\mathcal C_{\gamma_1}$ and $P_r\in\mathcal C_{\gamma_1}$ by assumption, and $\nu_s$ has only relaxation rates $r>r_0$. Let $\Delta=m\ge c_0r_0^{\gamma_1}$ and $B=\{|\hat\rho-\rho_s|\ge\Delta/2\}$. On $B^c$, $|\hat\rho-\rho_r|>\Delta/2$.

\emph{Small probability under $P_s$.} By assumption and Markov's inequality, $P_s(B)\le2C\,T^{-\gamma_2/(2\gamma_2+1)}/\Delta$.

\emph{Coupling.} Let $P_s'$ coincide with $P_r$ on $(-\infty,-L]$ and follow the $P_s$ dynamics on $(-L,\infty)$. Step~2 of the proof of Lemma~\ref{lem:coupling}, applied to the fast dynamics of $P_s$, gives $\TV(P_s'^T,P_s^T)\le\frac{4\rho_s\lb}{(1-\rho_s)\delta}e^{-\delta L}$ with $\delta=r_0(1-\rho_s)/2$; with $\delta L=2\log T$ this is $O(1/(r_0T^2))$. Hence $q_T:=P_s'(B)\le2C\,T^{-\gamma_2/(2\gamma_2+1)}/\Delta+O(1/(r_0T^2))$.

\emph{Kullback--Leibler budget.} As in Step~1 of Lemma~\ref{lem:coupling}, $\KL(P_r^T\|P_s'^T)\le(T+L)h_{rs}$ with $h_{rs}=\E_r\varphi(\lambda_r(0),\lambda_s(0))$. Here $\lambda_r-\lambda_s=Y-\lb m$ with $Y=\int\phi_{\rm slow}(t-s)N(\dd s)$, $\phi_{\rm slow}$ the kernel of $\nu^{(1)}|_{(0,r_0]}$, and $\E_rY=\lb m$. Arguing as in Proposition~\ref{prop:rate} under $P_r$, with $\lambda_s\ge\mu_s$,
\[
h_{rs}\le\frac{\mathrm{Var}_r(Y)}{\mu_s}\le\frac{\lb\,r_0\,m^2}{(1-\rho_r)^2\mu_s}\le K_1\,r_0^{2\gamma_1+1}
\]
for a constant $K_1$.

\emph{Constrained risk.} For the binary Kullback--Leibler divergence, $\mathrm{kl}(p,q)\ge p\log(1/q)-\log2$. By data processing,
\[
P_r(B)\le\frac{\KL(P_r^T\|P_s'^T)+\log2}{\log(1/q_T)} .
\]
Choose $r_0=r_0(T)$ with $K_1Tr_0^{2\gamma_1+1}=\tfrac b4\log T$, where $b=\frac{\gamma_2}{2\gamma_2+1}-\frac{\gamma_1}{2\gamma_1+1}>0$. Then $\Delta\asymp(\log T/T)^{\gamma_1/(2\gamma_1+1)}$, $L/T\to0$, $\KL\le\frac b4\log T(1+o(1))$ and $\log(1/q_T)=b\log T\,(1+o(1))$. Hence $P_r(B)\le\frac14+o(1)$, and
\[
\E_r|\hat\rho-\rho_r|\ge\frac\Delta2\,P_r(B^c)\ge\Big(\frac38-o(1)\Big)\Delta\asymp\Big(\frac{\log T}{T}\Big)^{\gamma_1/(2\gamma_1+1)} .
\]
Since $P_r\in\mathcal C_{\gamma_1}$, this proves the theorem. \qed

\parhead{Proof of Proposition~\ref{prop:noadaptci}.} Use the construction of the previous proof, but choose $r_0=r_0(T)$ with $K_1Tr_0^{2\gamma_1+1}=\kappa$ for a constant $\kappa$ such that $\sqrt{\kappa/2}\le8^{-1/2}$. Then $\KL(P_r^T\|P_s'^T)\le\kappa(1+o(1))$, so by Pinsker's inequality and the coupling bound, $\tau=\TV(P_r^T,P_s^T)\le8^{-1/2}+o(1)$, and $\Delta=m\ge c_0r_0^{\gamma_1}\asymp T^{-\gamma_1/(2\gamma_1+1)}$. Both $P_s$ and $P_r$ lie in $\mathcal C_{\gamma_1}$, where $I_T$ is honest, and $P_s\in\mathcal C_{\gamma_2}$. The argument of Proposition~\ref{prop:cilength}, with the roles of the two laws chosen so that the expectation is taken under $P_s$, gives $\E_s|I_T|\ge\Delta(1-2\alpha-\tau)$, which is bounded below by a positive multiple of $T^{-\gamma_1/(2\gamma_1+1)}$. \qed

\section{From approximation to an inverse problem}\label{sec:pseudo}

This section explains how Theorem~\ref{thm:main} manifests itself when kernels are fitted as sums of $K$ exponentials, $\phi_\theta(t)=\sum_{k=1}^Ka_kb_ke^{-b_kt}$ with $\rho_\theta=\sum_ka_k$. The statements here are partly formal and are supported by the computations of Section~\ref{sec:numerics}.

\parhead{A quadratic surrogate.} When relative intensity fluctuations are small, expanding $\varphi$ to second order around $\lambda_0$ and using~\eqref{eq:varformula}, with the baseline chosen to match the mean intensity, gives the quadratic functional
\begin{equation}\label{eq:D2}
D_2(\theta)=\frac{1}{4\pi}\int_\R\frac{|\widehat\phi_\theta(\omega)-\widehat\phi_0(\omega)|^2}{|1-\widehat\phi_0(\omega)|^2}\,\dd\omega .
\end{equation}
The mean intensity cancels. $D_2$ is a surrogate for, not equal to, the Kullback--Leibler rate of the misspecified model, and it is a stationary, infinite-horizon criterion that does not involve $T$. We call $\theta^\dagger_{2,K}=\arg\min_\theta D_2(\theta)$ the \emph{quadratic-surrogate pseudo-truth}. It is a weighted $L^2$ fit of the transfer function, with weight equal to the true spectrum.

The branching ratio, however, is a point evaluation, $\rho_\theta=\widehat\phi_\theta(0)$, and point evaluation at zero frequency is not coercively controlled by this weighted $L^2$ norm. A component $a\,b\,e^{-bt}$ changes $\widehat\phi_\theta(0)$ by $a$ but changes $D_2$ only by an amount that vanishes as $b\downarrow0$, since its transfer function concentrates on $|\omega|\lesssim b$. Combined with a finite record, which has effective frequency resolution of order $T^{-1}$, this lack of coercivity becomes the inverse-problem resolution limit quantified by Theorem~\ref{thm:main}.

\parhead{Small $K$: approximation error.} For $K\le4$, $\theta^\dagger_{2,K}$ is well defined and essentially independent of $T$. For the Omori kernel with $\rho_0=0.8$ and $\gamma=0.6$ it gives $\rho_0-\rho^\dagger_{2,K}$ equal to $0.199$, $0.087$, $0.046$ and $0.026$ for $K=1,\dots,4$. The biases of simulated misspecified maximum likelihood fits are $0.186$, $0.082$, $0.044$ and $0.023$ (Table~\ref{tab:K}, 20 replicates), so the surrogate predicts them to within about $10\%$ with no fitted constants; the surrogate is slightly conservative, as expected from a second-order approximation. This regime is governed by the positive rational approximation of a symbol with a branch point at the origin.

\parhead{Larger $K$: a flat slow-pole direction.} As $K$ grows, the fit can afford components with small $b$, and the non-coercive direction appears: slow components exchange mass with the baseline at almost no cost in $D_2$. In the present experiment this happens from about $K=5$ on: the minimiser of~\eqref{eq:D2} is no longer numerically determined, the likelihood of fitted models stops improving, and the estimated baseline absorbs the missing branching mass. The threshold $K\approx5$ is a property of this experiment, not a universal constant. The remaining bias is a resolution limit rather than an approximation error. We regard the transition between the two regimes as demonstrated numerically and explained by the theory; we do not locate it by a theorem.

\section{Numerical illustrations}\label{sec:numerics}

Unless otherwise stated, simulations use the Omori kernel of Example~\ref{ex:omori} with $\gamma=0.6$ and $\mu_0=0.2$, generated by the cluster representation with the rate of each offspring drawn exactly from $\nu$.

\parhead{Oracle conditional likelihood.} To isolate the rate mechanism we evaluate log-likelihood ratios (LLR) with intensities computed from the entire simulated past, including the points before time $0$. This gives the test more information than the observation scheme of Theorem~\ref{thm:main}, where the past is unobserved; Lemma~\ref{lem:coupling} handles that loss analytically. Intensities are evaluated through a discretised Bernstein lift: log-spaced bins of the rate axis, each carrying its exact $\nu$-mass, with the cut $r_0$ placed at a bin edge so that the removed mass is exact. With 160 bins the LLR differs from its 640-bin value by at most $9\times10^{-4}$ (mean $|\mathrm{LLR}|\approx0.6$), and by $1.8\times10^{-4}$ with 320 bins. All results below use 160 bins; the quadrature error at the extremes of the shrinking-difference experiment is reported in the supplement \citep{HerreraMarinSupp}.

\parhead{Burn-in.} Paths are simulated from an empty history at time $-B$, so the null process is not exactly stationary on $[0,T]$. The relative deficit of its mean intensity at time $0$ can be computed exactly through the Bernstein lift. It equals $2.7\times10^{-3}$ for $B=2\times10^5$ and $1.2\times10^{-3}$ for $B=8\times10^5$, that is $13\%$ and $6\%$ of the baseline shift $\lb m$ that separates the hypotheses when $m=0.02$. The deficit lowers the effective baseline of null paths and therefore makes the test easier, so reported test errors are, if anything, slightly optimistic for the detection of the slow alternative. Table~\ref{tab:lecam} uses $B=2\times10^5$ and Table~\ref{tab:tri} uses $B=8\times10^5$.

\subsection{Two-point experiments}

We compare the null ($\rho_0=0.8$) with alternatives having the same mean intensity. For each configuration we simulate independent paths under both laws, generated after the burn-in described above, and report the error of the LLR test with threshold zero,
\[
e_T=\tfrac12\big[P_0(\mathrm{LLR}<0)+P_1(\mathrm{LLR}\ge0)\big],
\]
and the mean LLR under the null, which estimates the conditional Kullback--Leibler divergence.

\parhead{Fixed difference.} Table~\ref{tab:lecam} uses $\Delta\rho=0.02$ and two alternatives: the construction~\eqref{eq:alt}, which removes the $\nu$-mass below $r_0$ ($1/r_0\approx564$), and a uniform rescaling of the kernel, both compensated in the baseline.

\begin{table}
\caption{Two-point experiment with fixed $\Delta\rho=0.02$ and equal mean intensity (30 paths per law).}\label{tab:lecam}
\begin{tabular}{@{}rcccc@{}}
\toprule
 & \multicolumn{2}{c}{Slow mass $\to$ immigration} & \multicolumn{2}{c}{Uniform rescaling}\\
$T$ & $e_T$ & $\E_0[\mathrm{LLR}]$ & $e_T$ & $\E_0[\mathrm{LLR}]$\\
\midrule
$10^{4}$ & $0.50\pm0.06$ & $-0.02\pm0.03$ & $0.33\pm0.06$ & $0.85\pm0.25$\\
$3\cdot10^{4}$ & $0.43\pm0.06$ & $0.09\pm0.05$ & $0.22\pm0.05$ & $1.81\pm0.48$\\
$10^{5}$ & $0.40\pm0.06$ & $0.13\pm0.11$ & $0.02\pm0.02$ & $8.58\pm0.77$\\
\bottomrule
\end{tabular}
\end{table}

What determines the available information is where the difference lies in the relaxation spectrum, not the size of $\Delta\rho$.

\parhead{Shrinking difference.} Table~\ref{tab:tri} follows the construction of the proof: $r_0(T)\propto T^{-1/(2\gamma+1)}$, so that $\Delta\rho_T=m(r_0(T))\propto T^{-\gamma/(2\gamma+1)}$. Theorem~\ref{thm:main} predicts that the test error stays bounded away from zero, and the mean LLR stays bounded, as $T$ grows.

\begin{table}
\caption{Two-point experiment with $\Delta\rho_T\propto T^{-\gamma/(2\gamma+1)}$ (200 paths per law, burn-in $B=8\times10^5$).}\label{tab:tri}
\begin{tabular}{@{}rcccc@{}}
\toprule
$T$ & $1/r_0(T)$ & $\Delta\rho_T$ & $e_T$ & $\E_0[\mathrm{LLR}]$\\
\midrule
$10^{4}$ & $198$ & $0.0374$ & $0.37\pm0.02$ & $0.14\pm0.03$\\
$3\cdot10^{4}$ & $326$ & $0.0278$ & $0.41\pm0.02$ & $0.18\pm0.04$\\
$10^{5}$ & $564$ & $0.0200$ & $0.42\pm0.02$ & $0.12\pm0.04$\\
$3\cdot10^{5}$ & $929$ & $0.0148$ & $0.41\pm0.02$ & $0.09\pm0.04$\\
\bottomrule
\end{tabular}
\end{table}

The test error stays between $0.37$ and $0.42$ and the mean LLR between $0.09$ and $0.18$ while $T$ grows by a factor of $30$, consistent with the prediction that the error remains bounded away from zero; for a fixed alternative the mean LLR would grow linearly in $T$. The test uses the conditional likelihood given the simulated past rather than the likelihood of the observed window, so it is not the Bayes test for the observation scheme of Theorem~\ref{thm:main}; its error is informative about detectability but is not bounded by $1/2$ by construction. Figure~\ref{fig:twopoint} summarises both experiments.

\begin{figure}
\centering
\includegraphics[width=\textwidth]{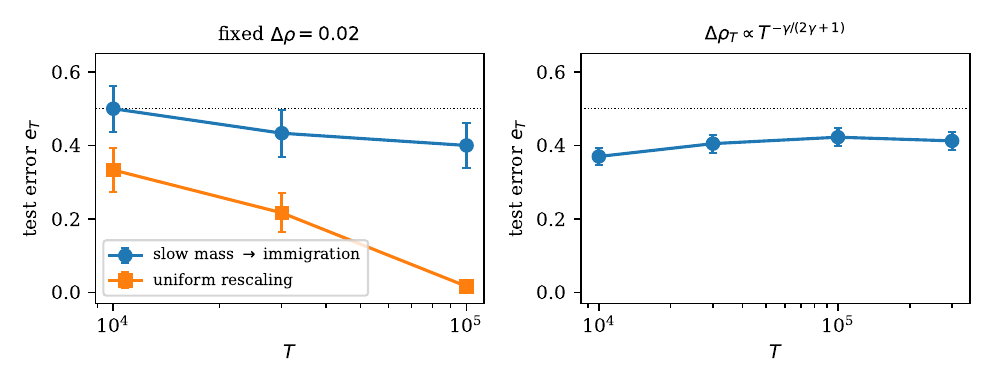}
\caption{Error of the zero-threshold LLR test. Left: fixed $\Delta\rho=0.02$, slow-mass alternative versus uniform rescaling. Right: $\Delta\rho_T\propto T^{-\gamma/(2\gamma+1)}$ as in the proof of Theorem~\ref{thm:main}. Bars are binomial standard errors.}\label{fig:twopoint}
\end{figure}

\subsection{Bias of flexible rational fits}

Table~\ref{tab:K} and Figure~\ref{fig:K} report maximum likelihood fits with $K$ exponentials on paths of length $T=2\cdot10^5$ generated after a burn-in of $2\times10^6$, compared with the quadratic-surrogate pseudo-truth. The four-parameter Omori fit recovers $\hat\rho=0.803$ and $0.790$ in two replicates, with a likelihood indistinguishable from that of $K=8$.

\begin{table}
\caption{Bias $\rho_0-\hat\rho$ of $K$-exponential fits ($\rho_0=0.8$, $T=2\cdot10^5$, 20 replicates) and the quadratic-surrogate pseudo-truth.}\label{tab:K}
\begin{tabular}{@{}rccc@{}}
\toprule
$K$ & Simulated bias & Surrogate & $\hat\mu$ (true $0.2$)\\
\midrule
1 & $0.186\pm0.001$ & $0.199$ & $0.385$\\
2 & $0.082\pm0.002$ & $0.087$ & $0.281$\\
3 & $0.044\pm0.002$ & $0.046$ & $0.243$\\
4 & $0.023\pm0.003$ & $0.026$ & $0.222$\\
6 & $0.019\pm0.003$ & flat & $0.218$\\
8 & $0.017\pm0.003$ & flat & $0.216$\\
\bottomrule
\end{tabular}
\end{table}

\begin{figure}
\centering
\includegraphics[width=\textwidth]{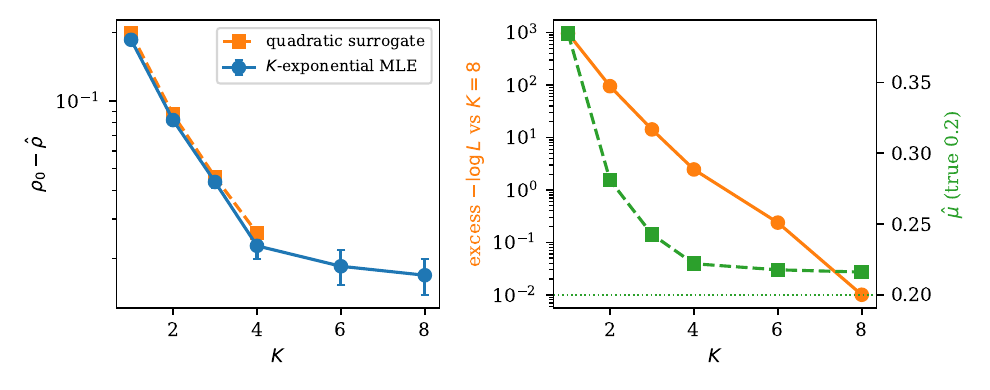}
\caption{Left: bias of $K$-exponential fits and the quadratic-surrogate pseudo-truth. Right: excess negative log-likelihood relative to $K=8$ and estimated baseline. The likelihood stops improving while the bias stalls and the baseline absorbs the missing branching mass.}\label{fig:K}
\end{figure}

For the best rational fit ($K\in\{6,8\}$), a joint regression of the log-bias over eight combinations of duration $T$ and immigration rate $\mu$ gives an exponent $-0.32\pm0.05$ in $T$, close to $-\gamma/(2\gamma+1)=-0.27$ (Figure~\ref{fig:duration}). A model based on duration and immigration rate fits the simulation grid substantially better than a model based on the number of events alone (weighted residual sums of squares $3.2$ and $75$ on $5$ and $6$ degrees of freedom; details in \citealp{HerreraMarinSupp}). At a comparable number of events, longer records give smaller bias.

\begin{figure}
\centering
\includegraphics[width=0.74\textwidth]{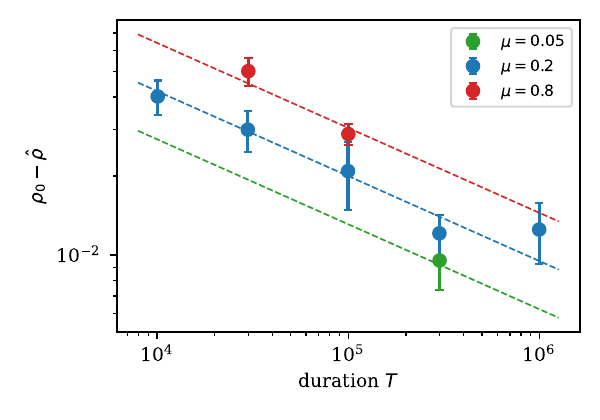}
\caption{Bias of the best rational fit ($K\in\{6,8\}$) against the observed duration, for three immigration rates. Dashed lines: joint power-law fit.}\label{fig:duration}
\end{figure}

\subsection{The block-dispersion estimator}

Table~\ref{tab:block} reports the mean absolute error of~\eqref{eq:blockest} over 40 paths of the Omori null ($\rho_0=0.8$, burn-in $8\times10^5$) for several block lengths. With $W\propto T^{1/(2\gamma+1)}$ the log-log slopes of the error in $T$ are $-0.25$, $-0.30$ and $-0.22$ for the three constants, close to $-\gamma/(2\gamma+1)=-0.27$. For the Omori experiment, a fixed block length produces a non-vanishing bias, of about $0.28$. At $T=10^6$, multiplying $W$ by four reduces the bias by a factor $2.33$, corresponding to an exponent $0.61$, in agreement with the $W^{-\gamma}$ bias of Lemma~\ref{lem:bias}. The estimator is computed exactly as in~\eqref{eq:blockest}, with $\mu_{\min}=0.05$, $\mu_{\max}=1$ and $\rho_{\max}=0.95$. Doubling or quadrupling the burn-in changes the slopes by at most $0.02$ \citep{HerreraMarinSupp}.

\begin{table}
\caption{Mean absolute error of the block-dispersion estimator ($\rho_0=0.8$, $\gamma=0.6$, 40 paths).}\label{tab:block}
\begin{tabular}{@{}lccc@{}}
\toprule
Block length & $T=10^4$ & $T=10^5$ & $T=10^6$\\
\midrule
$W=10$ & $0.282$ & $0.281$ & $0.280$\\
$W=T^{1/(2\gamma+1)}/4$ & $0.228$ & $0.140$ & $0.081$\\
$W=T^{1/(2\gamma+1)}$ & $0.118$ & $0.069$ & $0.037$\\
$W=4\,T^{1/(2\gamma+1)}$ & $0.063$ & $0.033$ & $0.016$\\
\bottomrule
\end{tabular}
\end{table}

\subsection{Confidence intervals}

Table~\ref{tab:ci} reports 300 paths of the Omori null ($\rho_0=0.8$, $\gamma=0.6$, burn-in $8\times10^5$) for the rate-optimal block length $W=T^{1/(2\gamma+1)}$ and for $W=T^{1/(2\gamma+1)}\log T$, with nominal level $0.95$. With the rate-optimal block length the bias is about twelve standard deviations and the interval never covers: bias and standard deviation have the same order but very different constants. With the longer blocks the predicted standard deviation $(1-\rho)\sqrt{W/(2T')}$ matches the empirical one (ratio $1.13$, $0.99$, $0.95$), the bias-to-standard-deviation ratio decreases as $(\log T)^{-(\gamma+1/2)}$, and coverage rises to $0.937$ at $T=10^7$. A Kolmogorov--Smirnov test does not reject normality of the standardised estimator at $T=10^7$ ($p=0.91$).

\begin{table}
\caption{Confidence intervals for $\rho$ (nominal level $0.95$, 300 paths). ``sd ratio'' is the empirical over the predicted standard deviation.}\label{tab:ci}
\begin{tabular}{@{}rcccccc@{}}
\toprule
 & \multicolumn{2}{c}{$W=T^{1/(2\gamma+1)}$} & \multicolumn{4}{c}{$W=T^{1/(2\gamma+1)}\log T$}\\
$T$ & bias/sd & coverage & $W$ & sd ratio & bias/sd & coverage\\
\midrule
$10^5$ & $-11.2$ & $0.00$ & $2157$ & $1.13$ & $-1.06$ & $0.890\pm0.018$\\
$10^6$ & $-11.8$ & $0.00$ & $7373$ & $0.99$ & $-0.86$ & $0.900\pm0.017$\\
$10^7$ & $-12.2$ & $0.00$ & $24498$ & $0.95$ & $-0.64$ & $0.937\pm0.014$\\
\bottomrule
\end{tabular}
\end{table}

\subsection{Adaptation}

Table~\ref{tab:lepski} compares a calibrated version of the adaptive estimator~\eqref{eq:lepski}, which does not use $\gamma$, with the block length $W=T^{1/(2\gamma+1)}$, which does, and with the best fixed grid block length chosen in hindsight, over 100 paths of Omori nulls with $\gamma\in\{0.4,0.6,0.8\}$. The calibrated rule differs from~\eqref{eq:lepski} in three respects: the threshold uses the plug-in standard deviation $\hat V_l\sqrt{2W_l/T'}$ instead of the class bound $\bar\sigma_l$, $x_T=\sqrt{\log T}$ (that is, $\varepsilon=0$), and the factor $6$ is replaced by $1$; the final estimate is clipped and projected as in~\eqref{eq:blockest}. The constants of~\eqref{eq:lepski} are chosen for the proof and are conservative in practice. Burn-in lengths are $3.2\times10^6$, $8\times10^5$ and $4\times10^5$ for $\gamma=0.4$, $0.6$ and $0.8$; doubling and quadrupling the burn-in at $\gamma=0.4$, $T=10^6$ changes the mean absolute errors by less than one Monte Carlo standard error \citep{HerreraMarinSupp}. The adaptive estimator matches or improves on the known-$\gamma$ choice for $\gamma=0.6$ and $0.8$ and is worse by a constant factor for $\gamma=0.4$. Its log-log slopes in $T$, computed from three values of $T$ and therefore only indicative, are $-0.27$, $-0.29$ and $-0.31$, against $-\gamma/(2\gamma+1)=-0.22$, $-0.27$ and $-0.31$. The hindsight oracle is about three times more accurate, which reflects the constants of the Lepski rule rather than its rate.

\begin{table}
\caption{Mean absolute error of the adaptive estimator, of the known-$\gamma$ block length and of the hindsight-optimal grid block length ($\rho_0=0.8$, 100 paths).}\label{tab:lepski}
\begin{tabular}{@{}cc ccc@{}}
\toprule
$\gamma$ & $T$ & adaptive & $W=T^{1/(2\gamma+1)}$ & hindsight oracle\\
\midrule
$0.4$ & $10^5$ & $0.1471$ & $0.0922$ & $0.0557$\\
$0.4$ & $10^6$ & $0.0809$ & $0.0552$ & $0.0316$\\
$0.4$ & $10^7$ & $0.0423$ & $0.0304$ & $0.0128$\\
\midrule
$0.6$ & $10^5$ & $0.0790$ & $0.0678$ & $0.0269$\\
$0.6$ & $10^6$ & $0.0394$ & $0.0385$ & $0.0133$\\
$0.6$ & $10^7$ & $0.0209$ & $0.0211$ & $0.0069$\\
\midrule
$0.8$ & $10^5$ & $0.0479$ & $0.0601$ & $0.0177$\\
$0.8$ & $10^6$ & $0.0251$ & $0.0329$ & $0.0080$\\
$0.8$ & $10^7$ & $0.0117$ & $0.0176$ & $0.0037$\\
\bottomrule
\end{tabular}
\end{table}

\section{Discussion}\label{sec:discussion}

\parhead{Structure, not architecture, restores identification.} The alternative in Theorem~\ref{thm:main} is not an Omori kernel. A correctly specified parametric subclass of fixed dimension excludes it and is not subject to the lower bound, consistently with the Omori fits of Section~\ref{sec:numerics}. Any procedure that learns a generic branching--relaxation measure in $\Cg$, however expressive, is subject to the bound. Parametrising the relaxation spectrum explicitly is useful because it makes such structural restrictions, and the zero-frequency mass, visible and controllable. We do not claim a parametric rate in the near-critical regime.

\parhead{Bias and coverage.} The honest intervals of Corollary~\ref{cor:ci} pay for bias-free inference with a factor $\ell_T^{1/2}$ in length and, as Table~\ref{tab:ci} shows, their coverage approaches the nominal level only at the slow rate at which $\ell_T$ grows. An alternative that avoids the factor is a bias-aware fixed-length interval in the spirit of \citet{ArmstrongKolesar2020}, which widens the critical value by the worst-case bias over $\Cg$; Lemma~\ref{lem:bias} provides an explicit bound, but its constant is conservative. Sharper bias control under second-order regular variation of $\nu$ near zero would allow a Richardson-type correction combining two block lengths.

\parhead{Likelihood-based estimators.} We expect sieve maximum likelihood with $K_T=\mathrm{polylog}(T)$ exponentials to attain the minimax rate up to logarithmic factors; positive exponential-sum approximation of power laws with accuracy $\varepsilon$ over a range of scales requires $K\sim\log(1/\varepsilon)\log(\tau_{\max}/\tau_{\min})$ terms \citep{BeylkinMonzon2010}, which suggests $K_T=O((\log T)^2)$. The numerics of Section~\ref{sec:numerics} are consistent with this, but asymptotic theory for Hawkes maximum likelihood with growing dimension is not available. Two further gaps are the power $(\log T)^{\varepsilon}$ between Theorems~\ref{thm:adapt} and~\ref{thm:price}, which comes from the use of Chebyshev's inequality and would close with exponential concentration of the dispersion statistic, and the sharp adaptive rate under a common loss, which would require matching upper and lower bounds for the same risk. Proposition~\ref{prop:noadaptci} rules out honest adaptation across the nested classes considered here; whether adaptation can be restored under additional tail regularity or self-similarity assumptions is open.

\parhead{Near-critical regime.} Theorem~\ref{thm:main} concerns fixed $\rho_0<1$. In the nearly unstable regime of \citet{JaissonRosenbaum2016}, $\mu_T\asymp T^{\alpha-1}\to0$, and the rescaled intensity converges to a rough square-root Volterra process \citep{ElEuchRosenbaum2019}, whose limit law has an atom at zero \citep{FriesenGerholdWiedermann2026}. The bound $\lambda_1\ge\mu_1$ used in Proposition~\ref{prop:rate} is then too crude. During low-intensity periods the baseline is exposed, and the relative baseline difference between the two hypotheses equals $m/(1-\rho)$, the relative error in the branching margin. Whether such periods occupy a non-vanishing fraction of a prelimit trajectory, and thereby reveal the branching margin that bursts conceal, is left open.

\parhead{Limitations.} The analysis is univariate and assumes a constant baseline. Non-stationary immigration biases branching-ratio estimates upwards \citep{FilimonovSornette2015,WehrliWheatleySornette2021}, whereas the mechanism studied here biases flexible estimators downwards. The two effects can partially compensate in empirical work; separating them requires joint modelling.

\begin{acks}[Acknowledgments]
AI-assisted tools (OpenAI ChatGPT and Anthropic Claude) were used for literature organisation, code assistance and editorial revision. The author independently verified all mathematical arguments, computations, references and reported results, and takes full responsibility for the manuscript.
\end{acks}

\begin{funding}
No specific funding was received for this work.
\end{funding}

\begin{supplement}
\stitle{Supplement to ``Minimax and adaptive inference for branching ratios under long memory''}
\sdescription{Simulation designs; quadrature and burn-in audits; details of the duration-versus-count regression; derivation of the quadratic surrogate; additional simulations for the block-dispersion estimator and for window-based fits. Available among the ancillary files of the arXiv version.}
\end{supplement}

\begin{supplement}
\stitle{Reproducibility package}
\sdescription{Python scripts and stored outputs that regenerate all tables and figures of the paper and the supplement (entry point \texttt{reproduce\_all.py}). Available among the ancillary files of the arXiv version.}
\end{supplement}

\bibliographystyle{spr-ims-nameyear}
\bibliography{refs}

\end{document}